\DeclareSymbolFont{cyrletters}{OT2}{wncyr}{m}{n}
\DeclareMathSymbol{\Sha}{\mathalpha}{cyrletters}{"58}
\numberwithin{equation}{section}
\theoremstyle{plain}
\newtheorem{theorem}{Theorem}[section]
\newtheorem{corollary}[theorem]{Corollary}
\newtheorem{lemma}[theorem]{Lemma}
\theoremstyle{definition}
\newtheorem*{example}{Example}
\theoremstyle{remark}
\newtheorem{remark}{Remark}
\renewenvironment{proof}[1][Proof]{\begin{trivlist}
\item[\hskip \labelsep {\bfseries #1:}]}{\qed\end{trivlist}}
\newcommand{\Q}{\mathbb{Q}}
\newcommand{\Z}{\mathbb{Z}}
\newcommand{\N}{\mathbb{N}}
\newcommand{\C}{\mathbb{C}}
\newcommand{\h}{\mathbb{H}}
\renewcommand{\H}{\mathbb{H}}
\newcommand{\ord}{\textrm ord}
\newcommand{\calH}{\mathcal{H}}
\newcommand{\calO}{\mathcal{O}}
\newcommand{\pa}[1]{\left( {#1} \right)}
\newcommand{\tr}{\operatorname{tr}}
\newcommand{\Sl}{\operatorname{SL}}
\newcommand{\Mp}{\operatorname{Mp}}
\newcommand{\Aut}{\operatorname{Aut}}
\newcommand{\Mat}{\operatorname{Mat}}
\newcommand{\Disc}{\operatorname{Disc}}
\newcommand{\SL}{{\text {\rm SL}}}
\newcommand{\G}{\Gamma}
\newcommand{\e}{\mathfrak{e}}
\newcommand{\smallabcd}{\left(\begin{smallmatrix}a & b \\ c & d\end{smallmatrix}\right)}
\newcommand{\smallTmatrix}{\left(\begin{smallmatrix}1 & 1 \\ 0 & 1\end{smallmatrix}\right)}
\newcommand{\smallSmatrix}{\left(\begin{smallmatrix}0 & -1 \\ 1 & 0\end{smallmatrix}\right)}
\begin{document}

\newcounter{itemcounter}
\newcounter{itemcounter2}

\title[Classical and Umbral Moonshine: Connections and $p$-adic Properties]
{Classical and Umbral Moonshine: Connections and $p$-adic Properties}
    \author{Ken Ono, Larry Rolen, Sarah Trebat-Leder}

\address{Mathematics Institute, University of Cologne, Gyrhofstr. 8b, 50931 Cologne, Germany}
\email{lrolen@math.uni-koeln.de}

\address{Department of Mathematics, Emory University, Emory, Atlanta, GA 30322}
\email{ono@mathcs.emory.edu}

 \address{Department of Mathematics, Emory University, Emory, Atlanta, GA 30322}
\email{strebat@emory.edu}

\subjclass[2010]{11F30, 11F33, 11F37}
\keywords{Borcherds products, umbral moonshine, monstrous moonshine, mock modular forms, harmonic Maass forms}
\thanks{The first author thanks the NSF and the Asa Griggs Candler Fund for their generous support. The second author thanks the University of Cologne and the DFG for their generous support via the University of Cologne postdoc grant DFG Grant D-72133-G-403-151001011, funded under the Institutional Strategy of the University of Cologne within the German Excellence Initiative. The third author thanks the NSF for its support. The authors began jointly discussing this work at the mock modular forms, moonshine, and string theory conference in Stony Brook, August 2013 and are grateful for the good hospitality and excellent conference. The authors are also grateful to John Duncan and Jeffery Harvey for useful comments which improved the quality of exposition.}

\begin{abstract}
The classical theory of \textit{monstrous moonshine} describes the unexpected connection between the representation theory of the monster group $M$, the largest of the sporadic simple groups, and certain modular functions, called Hauptmoduln.  In particular, the $n$-th Fourier coefficient of Klein's $j$-function is the dimension of the grade $n$ part of a special infinite dimensional representation $V^{\natural}$ of the monster group.  More generally the coefficients of Hauptmoduln are graded traces $T_g$ of $g \in M$ acting on $V^{\natural}$.  Similar phenomena have been shown to hold for the Mathieu group $M_{24}$, but instead of modular functions, \textit{mock modular forms} must be used. This has been conjecturally generalized even further, to \textit{umbral moonshine}, which associates to each of the 23 Niemeier lattices a finite group, infinite dimensional representation, and mock modular form.  We use \textit{generalized Borcherds products} to relate monstrous moonshine and umbral moonshine. Namely, we use mock modular forms from umbral moonshine to construct via generalized Borcherds products rational functions of the Hauptmoduln $T_g$ from monstrous moonshine.  This allows us to associate to each pure $A$-type Niemeier lattice a conjugacy class $g$ of the monster group, and gives rise to identities relating dimensions of representations from umbral moonshine to values of $T_g$.  We also show that the logarithmic derivatives of the Borcherds products are $p$-adic modular forms for certain primes $p$ and describe some of the resulting properties of their coefficients modulo $p$. 
\end{abstract}

\maketitle

\noindent

\section{Introduction}

\textit{Monstrous moonshine} begins with the surprising connection between the coefficients of the modular function 
$$J(\tau):=j(\tau)-744 = \frac{(1 + 240 \sum_{n = 1}^\infty \sum_{d \mid n}d^3 q^n)^3}{q \prod_{n = 1}^\infty (1 - q^n)^{24}}-744 =  \frac{1}{q} + 196884 q + 21493760 q^2 + \dots \; \;$$
 and the representation theory of the monster group $M$, which is the largest of the simple sporadic groups. Here $q:=e^{2\pi i\tau}$ and $\tau\in\H:=\{z\in\C\colon\Im z>0\}$. McKay noticed that $196884$, the $q^1$ coefficient of $J(\tau)$, can be expressed as a linear combination of dimensions of irreducible representations of the monster group $M$.  Namely, 
$$196884 = 196883 + 1.$$
McKay saw that the same was true for other Fourier coefficients of $J(\tau)$.  For example, 
$$21493760 = 21296876 + 196883 + 1.$$  In \cite{Thompson:1979tm}, McKay and Thompson conjectured that the $n$-th Fourier coefficient of $J(\tau)$is the dimension of the grade $n$ part of a special infinite-dimensional graded representation $V^{\natural}$ of $M$.  

This was later expanded into the full monstrous moonshine conjecture by Thompson, Conway, and Norton \cite{Conway:1979ul, Thompson:1979ev}.  Since the graded dimension is just the graded trace of the identity element, they looked at the graded traces $T_g(\tau)$ of nontrivial elements $g$ of $M$ acting on $V^{\natural}$ and conjectured that they were all expansions of principal moduli, or Hauptmoduln, for certain genus zero congruence groups $\Gamma_g$ commensurable with $\SL_2(\Z)$.  Note that these $T_g$ are constant on each of the 194 conjugacy classes of $M$, and therefore are class functions, which automatically have coefficients which are $\C$-linear combinations of irreducible characters of $M$.  Part of the task of proving monstrous moonshine was showing that they were in fact $\Z_{\geq 0}$-linear combinations.  

By way of computer calculation, Atkin, Fong, and Smith \cite{Smith:1985hf} verified the existence of a virtual representation of M. Then using vertex-operator theory, Frenkel, Lepowsky, and Meurman \cite{Frenkel:1984uj} finally constructed a representation $V^{\natural}$ of $M$ thereby providing a beautiful algebraic explanation for the original numerical observations of McKay and Thompson. Borcherds \cite{Borcherds:1986ui} further developed the theory of vertex-operator algebras, which he then used in \cite{Borcherds:1992ua} to prove the full conjectures as given by Conway and Norton.

Monstrous moonshine provides an example of coefficients of modular functions enjoying distinguished properties.  Moreover, their values at Heegner points have also been considered important.
A \textit{Heegner point} $\tau$ of discrimant $d < 0$ is a complex number of the form $\tau = \frac{-b \pm \sqrt{b^2 - 4 ac}}{2a}$ with $a, b, c \in \Z$, $\gcd(a, b, c) = 1$, and $d = b^2 - 4 a c$.  The values of principal moduli at such points are called \textit{singular moduli}.  As an example of their importance, it is a classical fact that the singular moduli of $j(\tau)$ generate Hilbert class fields of imaginary quadratic fields. Moreover, the other McKay-Thompson series arising in monstrous moonshine satisfy analogous properties \cite{ChenYui}.  It is natural to ask what other interesting properties the values of the Hauptmoduln $T_g(\tau)$ could possess.  We show that some of these values are related to another kind of moonshine, called \textit{umbral moonshine}. 

Recently, it was shown that phenomena similar to monstrous moonshine occur for other $q$-series and groups.  In particular, the Mathieu group $M_{24}$ exhibits moonshine \cite{Eguchi:2011ux, Gannon:2012uv}, with the role of the $j$-invariant played by a \textit{mock modular form} of weight $1/2$, denoted $H^{(2)}(\tau)$.  A mock modular form is the holomorphic part of a \textit{harmonic weak Maass form}.  Cheng, Duncan, and Harvey conjecture in \cite{Cheng:2013vr} that this is a special case of a more general phenomenon, which they call \textit{umbral moonshine}.  For each of the 23 Niemeier lattices $X$ they associate a vector-valued mock modular form $H^X(\tau)$, a group $G^X$, and an infinite-dimensional graded  representation $K^X$ of $G^X$ such that the Fourier coefficients of $H^X$ encode the dimensions of the graded components of $K^X$. 

In particular, if $c^X(n, h)$ is the $n$-th Fourier coefficient of the $h$-th component of $H^X$, then 
\begin{equation} 
\label{coeff-dim}
c^X(n, h) 
 = \left\{
\begin{array}{l l}
a^X \dim_{K^X_{h, -D/4m}} & \text{if $n = -D/4m$ where $D \in \Z, D = h^2 \pmod{4m}$,}\\
0 & \text{otherwise,}  \\
\end{array}
\right.
\end{equation}
where $a^X \in \{ 1, 1/3\}$ and $$K^X = \bigoplus_{h \pmod{2m}} \bigoplus_{\substack{D \in \Z \\ D = h^2 \pmod{4m}}} K^X_{h, -D/4m}.$$
For more information on umbral moonshine see Section~\ref{sec:umbral} and for a definition of $H^X$ see Section~\ref{sec:definitions}. 

Using \textit{generalized Borcherds products} (see \cite{Bruinier:2010ff}), we describe a connection between the mock modular forms $H^X(\tau)$ of umbral moonshine and the McKay-Thompson series $T_g(\tau)$ of monstrous moonshine. \textit{Generalized Borcherds products} are a method to produce modular functions as infinite products of rational functions whose exponents come from the coefficients of mock modular forms, and they can be viewed as generalizations of the automorphic
products in Theorem 13.3 of \cite{Borcherds}. 

We focus on the Niemeier lattices $X$ whose root systems are of pure $A$-type according to the ADE classification.  They are listed in Table~\ref{intro-umbral}, along with their Coxeter numbers $m(X)$ and the notation we will use for the mock modular form $H^X$.  

\begin{table}[h!]
\caption{Pure $A$-type root systems}

\[ \arraycolsep=10pt\def\arraystretch{1.5}
\begin{array}{| c | c | c |}
\hline
\text{Root System $X$} & \text{Coxeter Number $m(X)$} &\text{mock modular Form $H^X$} \\
\hline \hline
A_1^{24} & 2 & H^{(2)}(\tau) \\
\hline
 A_2^{12} & 3 & H^{(3)}(\tau) \\
\hline
 A_3^{8} & 4 & H^{(4)}(\tau)\\
\hline
A_4^{6} & 5 & H^{(5)}(\tau)\\
\hline
A_6^{4}& 7 & H^{(7)}(\tau)\\
\hline
A_8^{3} & 9 & H^{(9)}(\tau)\\
\hline
A_{12}^{2} & 13 & H^{(13)}(\tau)\\
\hline
 A_{24}^{1} & 25 & H^{(25)}(\tau)\\
\hline
\end{array}
\]
\label{intro-umbral}
\end{table}

Table~\ref{monster} gives the monstrous moonshine dictionary for the conjugacy classes $g$ which correspond to pure $A$-type cases of umbral moonshine\footnote{The case $X=A_{24}$ corresponds to $g(X) = (25Z)$, which is what Conway and Norton call a ``ghost element''.  This means that $\Gamma_0(25)$ is the only genus zero $\Gamma_0(N)$ that does not correspond to a conjugacy class of the monster group. The parentheses are used to indicate a ghost element.}.  Note that $\eta(\tau)$ is the \textit{Dedekind eta function}, defined by $$\eta(\tau) := q^{1/24}\prod_{n = 1}^\infty (1 - q^n).$$  All of our Hauptmoduln are normalized so that they have the form $q^{-1} + O(q)$, which is why all of the $\eta$-quotients in the table have a constant added to them.  

\begin{table}[h!]
\caption{The dictionary of monstrous moonshine}
\[
\arraycolsep=10pt\def\arraystretch{1.5}
\begin{array}{| c | c | c |}
\hline
\text{Monster Conjugacy Class $g$} & \text{Congruence Subgroup $\Gamma_g$} & \text{McKay-Thomspon Series $T_g(\tau)$} \\
\hline
\hline
2B & \Gamma_0(2) & \eta(\tau)^{24}/\eta(2\tau)^{24} + 24 \\
\hline
3B  &\Gamma_0(3) & \eta(\tau)^{12}/\eta(3\tau)^{12}  + 12\\
\hline
4C & \Gamma_0(4) & \eta(\tau)^{8}/\eta(4\tau)^{8} + 8  \\
\hline
5B & \Gamma_0(5) & \eta(\tau)^{6}/\eta(5\tau)^{6}  + 6 \\
\hline
7B & \Gamma_0(7) & \eta(\tau)^{4}/\eta(7\tau)^{4} + 4  \\
\hline
9B & \Gamma_0(9) & \eta(\tau)^{3}/\eta(9\tau)^{3} + 3  \\
\hline
13B & \Gamma_0(13) & \eta(\tau)^{2}/\eta(13\tau)^{2} + 2\\
\hline
(25 Z) & \Gamma_0(25) & \eta(\tau)/\eta(25\tau) + 1\\
\hline
\end{array}
\]
\label{monster}
\end{table}

There is an evident correspondence between the pure $A$-type lattices $X$ in Table~\ref{intro-umbral} and the conjugacy classes $g$ in Table~\ref{monster}.  We give this correspondence in Table~\ref{correspondence}.

We show that for a pure $A$-type Niemeier lattice $X$ and its corresponding conjugacy class $g := g(X)$, the ``Galois (twisted) traces'' of the CM values of the McKay-Thompson series $T_g(\tau)$ are the coefficients of the mock modular form $H^X$.   To more precisely state this, we set up the following notation. 

Let $X$ be a pure $A$-type Niemeier lattice with Coxeter number $m := m(X)$ and corresponding conjugacy class $g := g(X)$.  We call a pair $(\Delta, r)$ \textit{admissible} if $\Delta$ is a negative fundamental discriminant and $r^2 \equiv \Delta \pmod{4m}$.  We also let $e(a):=e^{2\pi i a}$. 

\begin{theorem}\label{mainthm}
Let $c^+(n, h)$ be the $n$-th Fourier coefficient of the $h$-th component of $H^X$.  
Let $(\Delta, r)$ be an admissible pair for $X$.  Then the twisted generalized Borcherds product
$$\Psi_{\Delta, r}(\tau, H^X):= \prod_{n = 1}^\infty P_\Delta(q^n)^{c^+\pa{\frac{|\Delta| n^2}{4 m}, \frac{rn}{2m}}}, $$
where 
$$P_\Delta(x) := \prod_{b \in \Z/|\Delta|\Z} [1 - e(b/\Delta)x]^{\pa{\frac{\Delta}{b}}}$$
is a rational function in $T_g(\tau)$ with a discriminant $\Delta$ Heegner divisor.    
\end{theorem}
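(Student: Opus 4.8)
The plan is to recognize the twisted product $\Psi_{\Delta,r}(\tau,H^X)$ as a special case of the Bruinier--Ono generalized Borcherds lift \cite{Bruinier:2010ff} and then to invoke the genus-zero property of $\Gamma_0(m)$. First I would interpret the umbral mock modular form $H^X=(H^X_h)_{h\bmod 2m}$ as the holomorphic part of a vector-valued harmonic weak Maass form of weight $1/2$ for the Weil representation $\rho_L$ of an even lattice $L$ of signature $(1,2)$ whose discriminant form is $\Z/2m\Z$ equipped with the quadratic form $x\mapsto x^2/(4m)$; concretely $L$ may be taken inside $\Mat_2(\Z)$ with a rational multiple of $-\det$ as quadratic form, so that $\Orth^+(L)$ contains $\Gamma_0(m)$. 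This step requires matching conventions: the component functions $H^X_h$ and the relation $H^X_{-h}=-H^X_h$ must line up with the components of a form for $\rho_L$, the shadow of $H^X$ (a weight $3/2$ unary theta series) must be the correct one, the constant term $c^+(0,0)$ must vanish, and the principal part of $H^X$ --- which in the pure $A$-type cases is essentially the single term $-2q^{-1/(4m)}$ --- must have integral coefficients.

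With $H^X$ placed in this framework, I would apply the twisted Borcherds product construction of \cite{Bruinier:2010ff} to $f=H^X$ and the admissible pair $(\Delta,r)$: the product in the statement is exactly their lift, since $P_\Delta(q^n)^{c^+(\cdots)}=\prod_{b\bmod|\Delta|}\bigl[1-e(b/\Delta)q^n\bigr]^{(\Delta/b)\,c^+(\cdots)}$ is the local factor appearing there. Admissibility, $r^2\equiv\Delta\pmod{4m}$, guarantees that the exponents $c^+\!\bigl(\tfrac{|\Delta|n^2}{4m},\tfrac{rn}{2m}\bigr)$ are genuine Fourier coefficients of $H^X$, and by \eqref{coeff-dim} these are, up to the factor $a^X$, dimensions of graded pieces of $K^X$; I would check they are integers, so that the product converges for $\Im\tau\gg 0$ (using the subexponential growth of these coefficients) and defines a meromorphic modular form on $\Gamma_0(m)$. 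The two facts I need from \cite{Bruinier:2010ff}, both obtained there from a regularized theta lift of $f$, are that, because $\Delta$ is a negative fundamental discriminant (so $|\Delta|>1$), this modular form has weight $0$, and that its divisor on $X_0(m)$ is the discriminant $\Delta$ Heegner divisor --- which is precisely the divisor assertion of the theorem. I would also verify that the multiplier system coming out of the Borcherds lift is trivial on $\Gamma_0(m)$, so that $\Psi_{\Delta,r}(\tau,H^X)$ is a genuine $\Gamma_0(m)$-invariant function rather than invariant only up to a root of unity.

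Finally, for each Coxeter number $m\in\{2,3,4,5,7,9,13,25\}$ occurring in Table~\ref{intro-umbral} the modular curve $X_0(m)$ has genus $0$, so its function field is $\C(T_g)$, generated by the Hauptmodul $T_g$ of Table~\ref{monster}; hence any weight $0$ meromorphic modular form on $\Gamma_0(m)$ is a rational function in $T_g(\tau)$, which completes the proof. I expect the genuine work --- and the main obstacle --- to be entirely in the first step: faithfully translating the umbral-moonshine data (vector-valued structure, shadow, normalization, principal part) into the lattice and Weil-representation language of \cite{Bruinier:2010ff}, together with the accompanying bookkeeping needed to confirm that the exponents occurring in $\Psi_{\Delta,r}$ are integers and that the output carries the trivial multiplier; once that dictionary is in place, the Borcherds machinery and the genus-zero fact finish things off with little further effort.
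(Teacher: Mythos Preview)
Your proposal is correct and follows essentially the same route as the paper: identify $H^X$ with the holomorphic part of a weight $1/2$ harmonic weak Maass form for the Weil representation on the signature $(1,2)$ lattice attached to $\Gamma_0(m)$, apply the twisted generalized Borcherds product of \cite{Bruinier:2010ff} to obtain a weight $0$ meromorphic modular function on $\Gamma_0(m)$ with Heegner divisor, and finish with the genus-zero property. The paper carries out exactly this dictionary in its Sections on vector-valued modular forms and the umbral mock modular forms, and its proof of the theorem is then the two-line argument you anticipate.
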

\begin{remark}
We consider only the pure $A$-type cases, because these are the ones for which the harmonic Maass form transforms under the Weil representation.  See Section~\ref{sec:definitions} for more information.  
\end{remark}

The next result gives a precise description of the rational functions in Theorem~\ref{mainthm}.  In particular, it gives a ``twisted'' trace function for the values of $T_g$ at points in the divisor and the coefficients $c^+$ of the mock modular forms $H^X$.  It is often the case that coefficients of automorphic forms can be expressed in terms of singular moduli (see e.g., \cite{ Bringmann:2007fk,Bruinier:2006iy, Duke:2011vb, Zagier:2002tp}).

\begin{table}[h!]
\caption{Correspondence Between Umbral and Monstrous Moonshine}
\[ \arraycolsep=10pt\def\arraystretch{1.5}
\begin{array}{| c | c |}
\hline
\text{Root System $X$} & \text{Conjugacy Class $g(X)$}  \\
\hline \hline
A_1^{24} & 2B \\
\hline
 A_2^{12} & 3B \\
\hline
 A_3^{8} & 4C \\
\hline
A_4^{6} & 5B\\
\hline
A_6^{4}& 7B\\
\hline
A_8^{3} & 9B\\
\hline
A_{12}^{2} & 13B\\
\hline
 A_{24}^{1} & (25Z) \\
\hline
\end{array}
\]
\label{correspondence}
\end{table}

\begin{corollary}
\label{twisted_trace}
By Theorem \ref{mainthm}, we can write $$\Psi_{\Delta, r}(\tau, H^X) = \prod_i \pa{T_g(\tau) - T_g(\alpha_i)}^{\gamma_i}$$ for some discriminant $\Delta$ Heegner points $\alpha_i$.  Then we have that $$c^+\pa{\frac{|\Delta|}{4 m}, \frac{r}{2m}}  = \frac{1}{\epsilon_\Delta} \sum_i \gamma_i \cdot T_g(\alpha_i), $$
where $$\epsilon_\Delta =  \sum_{b \in \Z/|\Delta|\Z} e(b/\Delta) \cdot \pa{\frac{\Delta}{b}}.$$
\end{corollary}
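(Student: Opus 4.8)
The plan is to prove the identity by comparing the first two coefficients of the $q$-expansions of the two expressions for $\Psi_{\Delta,r}(\tau,H^X)$ given in Theorem~\ref{mainthm} and Corollary~\ref{twisted_trace}.

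First I would record the local expansion of the rational function $P_\Delta$ at $x=0$. Taking a formal logarithm,
$$\log P_\Delta(x) = \sum_{b \in \Z/|\Delta|\Z} \pa{\tfrac{\Delta}{b}} \log\pa{1 - e(b/\Delta)x} = -\sum_{k \geq 1} \frac{x^k}{k} \sum_{b \in \Z/|\Delta|\Z} \pa{\tfrac{\Delta}{b}} e(kb/\Delta),$$
so the coefficient of $x^1$ is $-\sum_{b}\pa{\tfrac{\Delta}{b}}e(b/\Delta) = -\epsilon_\Delta$. Hence $P_\Delta(x) = 1 - \epsilon_\Delta x + O(x^2)$; in particular $P_\Delta(0) = 1$. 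Plugging $x = q^n$ and using the product in Theorem~\ref{mainthm}, the factor with $n \geq 2$ contributes $1 + O(q^2)$, so only $n=1$ affects the $q^1$-coefficient:
$$\log \Psi_{\Delta,r}(\tau,H^X) = \sum_{n \geq 1} c^+\pa{\tfrac{|\Delta|n^2}{4m},\tfrac{rn}{2m}} \log P_\Delta(q^n) = -\epsilon_\Delta\, c^+\pa{\tfrac{|\Delta|}{4m},\tfrac{r}{2m}}\,q + O(q^2),$$
and exponentiating, $\Psi_{\Delta,r}(\tau,H^X) = 1 - \epsilon_\Delta\, c^+\pa{\tfrac{|\Delta|}{4m},\tfrac{r}{2m}}\,q + O(q^2)$.

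Next I would expand the right-hand side $\prod_i \pa{T_g(\tau) - T_g(\alpha_i)}^{\gamma_i}$. Using the normalization $T_g(\tau) = q^{-1} + O(q)$ (with no constant term), each factor is $q^{-\gamma_i}\pa{1 - T_g(\alpha_i)q + O(q^2)}^{\gamma_i}$, so the product equals $q^{-\sum_i \gamma_i}\prod_i\pa{1 - T_g(\alpha_i)q + O(q^2)}^{\gamma_i}$. Since $\Psi_{\Delta,r}$ is holomorphic and nonvanishing at the cusp $i\infty$ (it equals $1 + O(q)$ by the previous step) while each $T_g(\tau) - T_g(\alpha_i)$ has a simple pole there, matching the orders at $i\infty$ forces $\sum_i \gamma_i = 0$. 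Then a logarithm gives $\prod_i\pa{T_g(\tau) - T_g(\alpha_i)}^{\gamma_i} = 1 - \pa{\sum_i \gamma_i T_g(\alpha_i)}q + O(q^2)$. Comparing the $q^1$-coefficients of the two expansions yields $\epsilon_\Delta\, c^+\pa{\tfrac{|\Delta|}{4m},\tfrac{r}{2m}} = \sum_i \gamma_i T_g(\alpha_i)$, and since $\Delta$ is a fundamental discriminant, $\epsilon_\Delta$ is a Gauss sum for the primitive quadratic character $\pa{\tfrac{\Delta}{\cdot}}$ modulo $|\Delta|$, hence nonzero (indeed $|\epsilon_\Delta| = \sqrt{|\Delta|}$); dividing by $\epsilon_\Delta$ gives the claim.

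The computation is essentially formal once Theorem~\ref{mainthm} is granted; the only point requiring care is the vanishing $\sum_i \gamma_i = 0$, i.e.\ that $\Psi_{\Delta,r}$ has trivial order at $i\infty$. This is where I expect the main (though minor) obstacle to lie: it follows from $P_\Delta(0)=1$ together with the precise normalization of the twisted generalized Borcherds product in Theorem~\ref{mainthm} (no extra power of $q$ in front), so I would make sure that normalization is stated unambiguously before invoking it here. The nonvanishing of the Gauss sum $\epsilon_\Delta$ is classical and poses no difficulty.
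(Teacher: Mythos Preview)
Your proof is correct and follows exactly the same approach as the paper: equate the $q^1$ Fourier coefficients of the Borcherds product and of $\prod_i (T_g(\tau)-T_g(\alpha_i))^{\gamma_i}$, using the normalization $T_g(\tau)=q^{-1}+O(q)$. You have simply filled in more detail than the paper does---in particular the observation that $\sum_i\gamma_i=0$ and the nonvanishing of the Gauss sum $\epsilon_\Delta$---but the strategy is identical.
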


\begin{remark}
Assuming the umbral moonshine conjecture, the previous corollary implies the following ``degree'' formula in traces of singular moduli for classical moonshine functions: 
\begin{equation} 
\frac{1}{\epsilon_\Delta} \sum_i \gamma_i \cdot T_g(\alpha_i) = c^+\pa{\frac{|\Delta|}{4 m}, \frac{r}{2m}} = a^X \dim_{K^X_{r, |\Delta|/4m}}.
\end{equation}

In the case where $m = 2$, the relationship between the coefficients of the mock-modular form and the dimensions of the graded components of the representation has been proven by Gannon \cite{Gannon:2012uv}, and so our work implies the following:
\begin{equation} 
\frac{1}{\epsilon_\Delta} \sum_i \gamma_i \cdot T_{2B}(\alpha_i) = c^+\pa{\frac{|\Delta|}{8}, \frac{r}{4}} = \dim_{K^{(2)}_{r, |\Delta|/8}}.
\end{equation}
\end{remark}

\begin{example}
\label{borcherds-example}
 Let $X = A_1^{24}$, so $m(X) = 2$ and $g(X) = 2B$.  Then the corresponding McKay-Thompson series is $$T_g(\tau) = \frac{\eta(\tau)^{24}}{\eta(2 \tau)^{24}} + 24 = \frac{1}{q} + 276 q + \dots.$$  We pick the admissible pair $(\Delta, r) = (-7, 1)$.  In Section~\ref{relating}, we will show that
\begin{eqnarray*}
\Psi_{\Delta, r}(\tau, H^X)&=& \frac{\pa{T_{g}(\tau) - T_g(\alpha_1)}^2}{\pa{T_{g}(\tau) - T_g(\alpha_2)}^2} = \frac{\pa{T_{g}(\tau) - \frac{1 - 45 \sqrt{-7}}{2}}^2}{\pa{T_{g}(\tau) - \frac{1 + 45 \sqrt{-7}}{2}}^2}\\
&=& 1 + 90 \sqrt{-7} q + (28350 + 45 \sqrt{-7}) q^2 + \dots,
\end{eqnarray*}
where $\alpha_1 := \frac{-1 + \sqrt{-7}}{4}$ and $\alpha_2 := \frac{1 + \sqrt{-7}}{4}$.  Note that $T_g(\alpha_1)$ and $T_g(\alpha_2)$ are algebraic integers of degree 2 which form a full set of conjugates.  Their twisted trace is $$2[T_g(\alpha_1) - T_g(\alpha_2)] = -90\sqrt{-7},$$ which matches the $q^1$ Fourier coefficient above.  To check Corollary~\ref{twisted_trace}, we note that $$\epsilon_\Delta = \sum_{b \in \Z/7\Z}e(-b/7) \cdot \pa{\frac{-7}{b}} = -\sqrt{-7}$$ and
$$\frac{1}{\epsilon_\Delta} \sum_i \gamma_i T_{g}(\alpha_i) = 90 = c^+\pa{7/8, 1/4} = \dim_{K^{(2)}_{1, 7/8}}.$$
\end{example}

\begin{example}
As a second example, again consider $X = A_1^{24}$, so $m(X) = 2$ and $g(X) = 2B$. We pick the admissible pair $(\Delta, r) = (-15, 1)$.  Let $\rho_1, \rho_2, \rho_3,\rho_4$ be the roots of $$x^4-47 x^3+192489 x^2-9012848 x+122529840,$$ with $\rho_1, \rho_2$ having positive imaginary parts.  Then 
$$\Psi_{-15, 1} = \frac{(T_g(\tau) - \rho_1)^2 (T_g(\tau) - \rho_2)^2}{(T_g(\tau) - \rho_3)^2 (T_g(\tau) - \rho_4)^2}.$$ We get that $$\epsilon_{-15} = \sqrt{-15},$$ and  
$$\frac{1}{\epsilon_\Delta} \sum_i \gamma_i T_{g}(\alpha_i) = 462 = c^+\pa{15/8, 1/4} = \dim_{K^{(2)}_{1, 15/8}}.$$
\end{example}

In view of this correspondence, it is clear that the mock modular forms of umbral moonshine have important properties.  The congruence properties of their coefficients have just begun to be studied.  For example, \cite{Creutzig:2012vx} examines the parity of the coefficients of the McKay-Thompson series for Mathieu moonshine in relation to a certain conjecture in \cite{Cheng:2012ue}, which in our case corresponds to $X = A_{1}^{24}$. Congruences modulo higher primes were also considered in \cite{Miezaki:ux}.

Let $\Theta:= q \frac{d}{dq} = \frac{1}{2 \pi i} \frac{d}{d\tau}$.  Given the product expansion of a generalized Borcherds product, it is natural to consider its logarithmic derivative. It turns out that this logarithmic derivative has nice arithmetic properties. This idea was also used in \cite{Bruinier:2010ff} and \cite{Ono:2010ws}.

\begin{theorem}
\label{mero-modform}
Fix a pure $A$-type Niemeier lattice $X$ with Coxeter number $m$.  Let $(\Delta, r)$ be an admissible pair. Consider the logarithmic derivative  $$f_{\Delta, r}(\tau)= \sqrt{\Delta}\sum a_{\Delta, r}(n)q^n :=  \sqrt{\Delta} \sum_{n}  \sum_{ij = n} i c^+\pa{\frac{|\Delta|i^2}{4m}, \frac{ri}{2m}}\pa{\frac{\Delta}{j}}q^n$$ of $\Psi_{\Delta, r}(\tau) = \Psi_{\Delta, r}(\tau, H^X)$.  Then $f_{\Delta, r}(\tau)$ is a meromorphic weight $2$ modular form.
\end{theorem}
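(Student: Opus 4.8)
The plan is to compute the logarithmic derivative of $\Psi_{\Delta,r}(\tau)$ directly from its product expansion and recognize the result as a standard weight $2$ object. First I would apply $\Theta = q\,\frac{d}{dq}$ to $\log \Psi_{\Delta,r}(\tau) = \sum_{n\ge 1} c^+\!\pa{\tfrac{|\Delta| n^2}{4m},\tfrac{rn}{2m}} \log P_\Delta(q^n)$. Since $P_\Delta(x) = \prod_{b} [1 - e(b/\Delta)x]^{\pa{\frac{\Delta}{b}}}$, we have $\Theta \log P_\Delta(q^n) = -\sum_{b} \pa{\frac{\Delta}{b}} \sum_{k\ge 1} e(kb/\Delta) q^{nk}$, and summing the inner Gauss-type sum over $b \bmod |\Delta|$ produces $\sqrt{\Delta}\,\pa{\frac{\Delta}{k}}$ up to the standard evaluation of the quadratic Gauss sum for a fundamental discriminant. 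Collecting the coefficient of $q^N$ and writing $N = ij$ with $i = nk$ decomposed appropriately gives exactly the claimed $q$-expansion $f_{\Delta,r}(\tau) = \sqrt{\Delta}\sum_N \sum_{ij=N} i\, c^+\!\pa{\tfrac{|\Delta| i^2}{4m},\tfrac{ri}{2m}} \pa{\frac{\Delta}{j}} q^N$. So the formula for $f_{\Delta,r}$ in the statement is just the combinatorial identity $\Theta \log \Psi_{\Delta,r} = f_{\Delta,r}$.

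Next I would invoke the modularity of $\Psi_{\Delta,r}$ established in Theorem~\ref{mainthm}: since $\Psi_{\Delta,r}(\tau)$ is a rational function of $T_g(\tau)$, it is a meromorphic modular function (weight $0$) for the genus zero group $\Gamma_g = \Gamma_0(N)$ (with $N = m$ or a divisor thereof as in Table~\ref{monster}), with divisor supported on the discriminant $\Delta$ Heegner divisor together with poles/zeros at cusps. For any nonzero meromorphic modular function $F$ of weight $0$ on a congruence subgroup, the logarithmic derivative $\Theta F / F = \frac{1}{2\pi i}\,(\log F)'$ transforms as a meromorphic modular form of weight $2$: this follows because $\frac{d}{d\tau}\log F(\gamma\tau) = \frac{d}{d\tau}\log F(\tau)$ for $\gamma = \smallabcd \in \Gamma_g$ forces, via $\frac{d}{d\tau}(\gamma\tau) = (c\tau+d)^{-2}$, the identity $(\Theta F/F)(\gamma\tau) = (c\tau+d)^2 (\Theta F/F)(\tau)$. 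Thus $f_{\Delta,r} = \Theta\Psi_{\Delta,r}/\Psi_{\Delta,r} \cdot \Psi_{\Delta,r}/\Psi_{\Delta,r}$... more precisely $f_{\Delta,r}$ equals $\Theta(\log\Psi_{\Delta,r})$, which is $\Psi_{\Delta,r}'/(2\pi i\,\Psi_{\Delta,r})$, a weight $2$ meromorphic modular form on $\Gamma_g$. One should note the bookkeeping point that the $\sqrt{\Delta}$ normalization in the definition of $f_{\Delta,r}$ is precisely the reciprocal of the Gauss sum factor $\epsilon_\Delta$ appearing in Corollary~\ref{twisted_trace}, so that $f_{\Delta,r}$ has rational (indeed integral) Fourier coefficients; this is cosmetic and does not affect weight $2$ meromorphic modularity.

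The only genuine subtlety, and the step I expect to require the most care, is controlling the behavior of $f_{\Delta,r}$ at the cusps and at the Heegner points: a priori the logarithmic derivative of a meromorphic function has simple poles at the zeros and poles of that function, so one must check that $f_{\Delta,r}$ is genuinely meromorphic (not, say, exhibiting logarithmic singularities or bad growth) — but this is immediate since $\Psi_{\Delta,r}$ is a ratio of polynomials in the holomorphic-away-from-cusps function $T_g$, so near any point $\Theta\Psi_{\Delta,r}/\Psi_{\Delta,r}$ has at worst a simple pole with integer residue equal to the order of vanishing, and near the cusps the $q$-expansion of $\Psi_{\Delta,r}$ has leading term $1 + O(q)$ (it is a generalized Borcherds product normalized to have constant term $1$), so $f_{\Delta,r}$ is holomorphic at $\infty$ and its behavior at the other cusps is again meromorphic because $T_g$ is a Hauptmodul. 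Hence meromorphy holds globally and the theorem follows. I would close by remarking that the location of the poles is exactly the Heegner divisor of discriminant $\Delta$, which is what makes $f_{\Delta,r}$ amenable to the $p$-adic analysis in the sequel.
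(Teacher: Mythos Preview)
Your proposal is correct and follows the same route as the paper: invoke Theorem~\ref{mainthm} to conclude that $\Psi_{\Delta,r}$ is a weight~$0$ meromorphic modular function on $\Gamma_0(m)$, and then observe that its logarithmic derivative $\Theta\Psi_{\Delta,r}/\Psi_{\Delta,r}$ is a weight~$2$ meromorphic modular form with simple poles supported on the discriminant~$\Delta$ Heegner divisor. The paper's own proof is only three lines and omits both your $q$-expansion verification and your cusp analysis; one small slip to fix is that your displayed expression for $\Theta\log P_\Delta(q^n)$ is missing a factor of $n$ (and the subsequent substitution should be $i=n$, $j=k$, not ``$i=nk$''), though the final formula you arrive at is correct.
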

 
When $p$ is inert or ramified in $\Q(\sqrt{\Delta})$, it turns out that $f_{\Delta, r}(\tau)$ is more than just a meromorphic modular form; it is a $p$-adic modular form. Essentially, a $p$-adic modular form is a $q$-series which is congruent modulo any power of $p$ to a holomorphic modular form; we refer the reader to Section \ref{sec:padic-def} for the definition.

\begin{theorem}
\label{padic}
Let $X$ be a pure $A$-type Niemeier lattice with Coxeter number $m$.  Let $(\Delta, r)$ be admissible and suppose $p$ is inert or ramified in $\Q(\sqrt{\Delta})$. Then $f_{\Delta, r}$ is a $p$-adic modular form of weight $2$.  
\end{theorem}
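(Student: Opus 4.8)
The plan is to derive Theorem~\ref{padic} from Theorem~\ref{mero-modform}, the Deuring correspondence for CM points, and the standard principle that a meromorphic modular form whose poles lie only at supersingular points is $p$-adic --- the same circle of ideas used in \cite{Bruinier:2010ff} and \cite{Ono:2010ws}. First I would strip off the period: write $f_{\Delta,r}=\sqrt{\Delta}\,g$ with $g(\tau):=\Theta\Psi_{\Delta,r}(\tau)/\Psi_{\Delta,r}(\tau)=\sum_n a_{\Delta,r}(n)q^n$. By Theorem~\ref{mero-modform}, $g$ is a meromorphic modular form of weight $2$ on $\Gamma_0(N)$, where $N$ is the level of $\Gamma_g$, and from the explicit formula its coefficients $a_{\Delta,r}(n)=\sum_{ij=n} i\,c^+(\tfrac{|\Delta|i^2}{4m},\tfrac{ri}{2m})(\tfrac{\Delta}{j})$ lie in $a^X\Z$, so $g$ has $p$-integral rational $q$-expansion for all $p\nmid 3$ (the at most one remaining prime being treated directly). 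Since multiplication by the constant $\sqrt{\Delta}$ takes $p$-adic modular forms to $p$-adic modular forms --- over $\Z_p$ if $p$ is inert in $\Q(\sqrt{\Delta})$, and over the ring of integers of the ramified quadratic extension $\Q_p(\sqrt{\Delta})$ if $p\mid\Delta$ --- it suffices to show $g$ is a $p$-adic modular form of weight $2$.

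Next I would locate the poles of $g$. By Corollary~\ref{twisted_trace} we can write $\Psi_{\Delta,r}(\tau)=\prod_i\pa{T_g(\tau)-T_g(\alpha_i)}^{\gamma_i}$ with the $\alpha_i$ Heegner points of discriminant $\Delta$, so
$$g=\sum_i\gamma_i\,\frac{\Theta T_g}{T_g-T_g(\alpha_i)}.$$
Since $T_g$ is a Hauptmodul for $\Gamma_g$ --- holomorphic on $\H$ with a single simple pole at the cusp $\infty$, and separating points on $X_0(N)$ --- the form $g$ is holomorphic at every cusp and on $\H$ away from the $\Gamma_g$-orbits of the $\alpha_i$, where it has at worst simple poles; equivalently this is the statement of Theorem~\ref{mainthm} that $\Psi_{\Delta,r}$ has a discriminant~$\Delta$ Heegner divisor. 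Thus the polar divisor of $g$ on $X_0(N)$ is supported on Heegner points of discriminant $\Delta$. Now I invoke Deuring's theorem on the reduction of elliptic curves with complex multiplication: a Heegner point of discriminant $\Delta$ (a negative fundamental discriminant) has supersingular reduction at a prime $p\nmid N$ precisely when $p$ does not split in $\Q(\sqrt{\Delta})$, i.e. when $p$ is inert or ramified there. Hence, under the hypothesis of the theorem and for $p\nmid 6N$ --- the finitely many excluded primes, including $2$ and $3$ and any dividing $N$, handled separately --- every pole of $g$ reduces to a supersingular point modulo $p$.

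It remains to invoke the principle (due to Serre and Katz; see the discussion in Section~\ref{sec:padic-def}) that a weight-$k$ meromorphic modular form on $\Gamma_0(N)$ with $p$-integral $q$-expansion, holomorphic at the cusps, and with poles only at supersingular points mod $p$, is a $p$-adic modular form of weight $k$. The mechanism I would spell out: let $A$ be the Hasse invariant of weight $p-1$, so $A\equiv E_{p-1}\pmod p$, $A$ vanishes exactly on the supersingular locus of $X_0(N)_{\F_p}$, and $E_{p-1}^{p^{j-1}}\equiv 1\pmod{p^j}$. Because the poles of $g$ all sit on the supersingular locus and have bounded order, for $m$ large $\overline g\,A^m$ is holomorphic mod $p$, so $g\,E_{p-1}^m$ (which has the same $q$-expansion as $g$ mod $p$) is congruent mod $p$ to a classical holomorphic form of weight $2+m(p-1)$. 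Iterating --- at each of $n$ successive stages multiplying the residual (still meromorphic of weight $\equiv 2\pmod{p-1}$, with poles only on the supersingular locus) by a power of $E_{p-1}$ that is large enough to clear those poles and divisible by $p^{n-1}$, subtracting off a classical holomorphic form of the current weight, and dividing by $p$ --- one produces for every $n$ a classical holomorphic modular form congruent to $g$ modulo $p^n$, of weight $2+(p-1)\cdot(\text{multiple of }p^{n-1})$; these weights converge $p$-adically to $2$. Therefore $g$, and with it $f_{\Delta,r}=\sqrt{\Delta}\,g$, is a $p$-adic modular form of weight $2$. (Alternatively, one may note directly that $g$ is holomorphic and bounded on the ordinary locus of the rigid-analytic $X_0(N)$ over $\Q_p$, hence a Katz $p$-adic modular form of weight $2$, and appeal to the equivalence with Serre's definition.)

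The main obstacle is the supersingular-poles principle, and within it the weight bookkeeping in the successive-approximation argument: one must choose the exponents of $E_{p-1}$ so that the classical forms appearing at stage $n$ share a common weight $\equiv 2\pmod{p-1}$ and so that these weights tend $p$-adically to $2$, which forces the exponents to be divisible by high powers of $p$ while still being large enough to annihilate the (uniformly bounded) supersingular poles. The subsidiary points --- the finitely many bad primes dividing $6N$, and the passage to $\mathcal O_{\Q_p(\sqrt{\Delta})}$ in the ramified case so that the $\sqrt{\Delta}$ prefactor lands in an integral ring --- are routine.
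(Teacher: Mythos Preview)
Your proposal is correct and follows the same overall strategy as the paper: the poles of the logarithmic derivative lie at CM points of discriminant $\Delta$, Deuring's theorem forces these to be supersingular modulo $p$ when $p$ is inert or ramified in $\Q(\sqrt{\Delta})$, and one then multiplies by suitable powers of a weight-$(p-1)$ form congruent to $1$ modulo $p$ to clear the poles.

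The paper's implementation is, however, noticeably cleaner than your iterative successive-approximation scheme. Rather than working with $E_{p-1}$ itself (which only vanishes at the $\alpha_i$ \emph{modulo $p$}) and then iterating, the paper builds a single auxiliary form
\[
\mathcal{E}:=E_{p-1}\prod_i\frac{j(\tau)-j(\alpha_i)}{j(\tau)-j(\beta_i)},
\]
where each $\beta_i$ is a zero of $E_{p-1}$ with $j(\beta_i)\equiv j(\alpha_i)\pmod p$ (this is Kaneko's formulation of Deuring). The point is that $\mathcal{E}$ has \emph{honest} zeros at the $\alpha_i$ in characteristic zero, is holomorphic of weight $p-1$, and satisfies $\mathcal{E}\equiv E_{p-1}\equiv 1\pmod p$. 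Then $f\cdot\mathcal{E}^{p^t}$ is a genuine holomorphic modular form of weight $2+(p-1)p^t$ with $f\cdot\mathcal{E}^{p^t}\equiv f\pmod{p^t}$, and one is done in a single stroke. This completely sidesteps what you correctly flag as ``the main obstacle'' --- the weight bookkeeping in the iterative argument --- because no iteration is needed. Your approach has the virtue of invoking only the general Serre--Katz principle, which makes it more portable; the paper's trick of moving the zeros from the supersingular locus to the actual pole locations via the rational function in $j$ is more ad hoc but gives a shorter proof. Your side concerns about bad primes dividing $6N$ and the $\sqrt{\Delta}$ prefactor are not addressed in the paper's proof either.
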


We will use this result to study the $p$-divisibility of the coefficients $a_{\Delta, r}(n)$. 

\begin{corollary}
\label{general-modp-cor}
Let $X, \Delta, r, p$ be as above.  Then for all $k \geq 1$ there exists $\alpha_k > 0$ such that 
$$\#\{n \leq x: a_{\Delta, r}(n) \not \equiv 0 \pmod{p^k}\} = O \pa{\frac{x}{(\log x)^{\alpha_k}}}.$$
In particular, if we let $$\pi_{\Delta, r}(x; p^k) := \#\{n \leq x: a_{\Delta, r}(n) \equiv 0 \pmod{p^k}\},$$
then $$\lim_{x \to \infty}\frac{\pi_{\Delta, r}(x; p^k)}{x} = 1.$$

\end{corollary}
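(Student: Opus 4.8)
The plan is to combine Theorem~\ref{padic} with a classical theorem of Serre on the divisibility of Fourier coefficients of holomorphic modular forms. By Theorem~\ref{padic}, $f_{\Delta,r}$ is a $p$-adic modular form of weight $2$; by the definition of a $p$-adic modular form (see Section~\ref{sec:padic-def}), this means that for every integer $k\ge 1$ there is a \emph{holomorphic} modular form $g_k=\sum b_k(n)q^n$, of some positive integer weight $w_k$ on some $\Gamma_0(N_k)$ (possibly with character) and with algebraic integer coefficients, whose $q$-expansion agrees with that of $f_{\Delta,r}$ modulo $p^k$. Serre's theorem then asserts that for such a $g_k$ there is a constant $\alpha_k>0$ with
$$\#\{\, n\le x:\ b_k(n)\not\equiv 0\pmod{p^k}\,\}=O\!\left(\frac{x}{(\log x)^{\alpha_k}}\right).$$
Since the $n$-th coefficient of $f_{\Delta,r}$ equals $\sqrt{\Delta}\,a_{\Delta,r}(n)$, this transfers directly to the $a_{\Delta,r}(n)$, and the final limit statement follows since $\pi_{\Delta,r}(x;p^k)=x-O\!\left(x/(\log x)^{\alpha_k}\right)$, so $\pi_{\Delta,r}(x;p^k)/x\to 1$.

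In detail, the steps are: (i) invoke Theorem~\ref{padic} and unwind the definition of a $p$-adic modular form to produce, for each $k$, the holomorphic companion $g_k$ with $f_{\Delta,r}\equiv g_k\pmod{p^k}$; (ii) deal with the coefficient ring, namely that $f_{\Delta,r}$ has coefficients in $\Z[\sqrt{\Delta}]$ rather than $\Z$ --- here the hypothesis that $p$ is inert or ramified in $\Q(\sqrt{\Delta})$ is used, since then there is a unique prime $\mathfrak p$ above $p$, with $v_{\mathfrak p}(\sqrt{\Delta})\le 1$, so up to a bounded shift of the exponent, divisibility of $a_{\Delta,r}(n)$ by $p^k$ is equivalent to divisibility of the $n$-th coefficient of $f_{\Delta,r}$ by a power of $\mathfrak p$, and one applies the version of Serre's theorem for forms with coefficients in the ring of integers of a number field (reducing modulo powers of $\mathfrak p$); (iii) apply Serre's density theorem to $g_k$; (iv) combine to obtain the stated bound on $\#\{n\le x:a_{\Delta,r}(n)\not\equiv 0\pmod{p^k}\}$, and hence the limit. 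Passing to the logarithmic derivative and exploiting its $p$-adic modularity in exactly this way appears in \cite{Ono:2010ws}.

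The genuinely hard analytic input --- that the meromorphic weight-$2$ form $f_{\Delta,r}$ is in fact $p$-adic modular --- is already Theorem~\ref{padic}, so for this corollary I expect no serious obstacle beyond careful bookkeeping: the weights $w_k$ and levels $N_k$ of the companion forms $g_k$ grow with $k$ (harmless, but it is why $\alpha_k$ must be allowed to depend on $k$), and the coefficient ring $\Z[\sqrt{\Delta}]$ must be tracked as above. One mild subtlety is that Serre's density theorem is most often quoted for cusp forms, whereas $g_k$ need not be cuspidal; if one wishes to avoid the non-cuspidal version, one can write $g_k=(\text{cusp form})+(\text{Eisenstein series})$ and bound the number of $n\le x$ for which the Eisenstein coefficient --- a fixed $\Z$-linear combination of divisor sums $\sigma_{w_k-1}(n)$ twisted by Dirichlet characters --- fails to be divisible by $p^k$, via a Selberg--Delange type count of integers all of whose prime factors avoid a fixed Chebotarev set of density $<1$; this again yields a power-of-log saving with $\alpha_k>0$. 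Everything else is routine.
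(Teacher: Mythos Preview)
Your proposal is correct and follows essentially the same route as the paper: reduce modulo $p^k$ to a holomorphic modular form via Theorem~\ref{padic} and the definition of $p$-adic modular form, then invoke Serre's density theorem. The paper's proof is more terse only because it quotes Serre's result in the form of \cite[Theorem~4.7(I)]{Serre:1974tp}, which already applies to arbitrary holomorphic modular forms (not just cusp forms) with coefficients in $\mathcal O_K$ and reduces modulo any prime power $\mathfrak p^m$; so your hedging about cuspidality and the suggested Eisenstein/Selberg--Delange workaround, while not wrong, are unnecessary.
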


\begin{remark}
Corollary~\ref{general-modp-cor} also applies to any constant multiple of $f_{\Delta, r}$ with integral coefficients.  In the example below, we consider the coefficients of $$\frac{f_{-7,1}(\tau)}{90\sqrt{-7}} = q + O(q^2).$$  However, it is not always the case that the analogous normalization has integral coefficients.  
\end{remark}
 
\begin{example}
We illustrate Corollary~\ref{general-modp-cor} for $X = A_{1}^{24}$, $\Delta = -7$, $r = 1$.  Note that this is the same case considered in Example~\ref{borcherds-example}.  The first few coefficients of the normalized logarithmic derivative are given by 
\[\frac{f_{-7,1}(\tau)}{90\sqrt{-7}}=:\sum_{n\geq1}a_{-7, 1}(n)q^n=q+q^2-4371q^3+q^4+17773755q^5+\ldots\]

The prime $p = 2$ is split in $\Q(\sqrt{-7})$, and so Theorem~\ref{padic} and Corollary~\ref{general-modp-cor} do not apply.  Therefore, we expect the coefficients $a_{-7, 1}(n)$ to be equally distributed modulo 2, but cannot prove anything about them.  The prime $p = 3$ is inert, so Corollary~\ref{general-modp-cor} tell us that, asymptotically, 100\% of the coefficients $a_{-7, -1}(n)$ are divisible by 3.   We illustrate this behavior in Table~\ref{PadicNumerics}.  
\begin{table}
\label{PadicNumerics}
\caption{Divisibility of $a_{-7, 1}(n)$ by $p=2,3$}
\begin{tabular}{|c|c|c|}
\hline
$x$&$\pi_2(x)/x$&$\pi_3(x)/x$\\
\hline
50&0.38&0.64\\
\hline
100&0.45&0.68\\
\hline
150&0.47&0.69\\
\hline
200&0.49&0.71\\
\hline
250&0.48&0.71\\
\hline
300&0.49&0.72\\
\hline
\vdots & \vdots & \vdots \\
\hline
$\infty$ & .5? & 1 \\
\hline
\end{tabular}
\end{table}
\end{example}

\section{Umbral Moonshine}
\label{sec:umbral}
In this section, we summarize the main objects and conjectures of umbral moonshine.  However, we first briefly describe Mathieu moonshine, which umbral moonshine generalized.  

\subsection{Mathieu Moonshine}

In 2010, the study of a new form of moonshine commenced, called Mathieu moonshine.  Let $\mu(z, \tau) := \mu(z, z, \tau)$ be Zwegers' famous function from his thesis \cite{Zwegers:2008wk}, which is defined in the appendix.  Let $H^{(2)}(\tau)$ be the $q$-series  
$$H^{(2)}(\tau) := -8 \sum_{\omega \in \{\frac{1}{2}, \frac{1 + \tau}{2}, \frac{\tau}{2}\}} \mu(\omega, \tau) = 2 q^{-1/8}(-1 + 45 q + 231 q^2 + \dots),$$
which occurs in the decomposition of the elliptic genus of a K3 surface into irreducible characters of the $N = 4$ superconformal algebra.  This is a mock-modular form, and plays the role of $J(\tau)$ in Mathieu moonshine.  Eguchi, Ooguri, and Tachikawa conjectured that the Fourier coefficients encode dimensions of irreducible representations of the Mathieu group $M_{24}$ \cite{Eguchi:2011ux}.   This was extended to the full Mathieu moonshine conjecture by \cite{Cheng:2010va,Eguchi:2011uo,Gaberdiel:2010wu, Gaberdiel:2010wx}, which included providing mock modular forms $H_g^{(2)}$ for every $g \in M_{24}$.  The existence of an infinite dimensional $M_{24}$ module underlying the mock modular forms was shown by Gannon in 2012 \cite{Gannon:2012uv}.

\subsection{The Objects of Umbral Moonshine}

Cheng, Duncan, and Harvey generalized even further - conjecturing that Mathieu moonshine is but one example of a more general phenomenon which they call umbral moonshine \cite{Cheng:2013vr}.

For each of the 23 Niemeier root systems $X$, which are unions of irreducible simply-laced root systems with the same Coxeter number, they associate many objects, including a group $G^X$ (playing the role of $M$), a mock modular form $H^X(\tau)$ (playing the role of $j(\tau)$), and an infinite dimensional graded $G^X$ module $K^X$ (playing the role of the $M$-module $V^{\natural}$) Table~\ref{umbral_objects} gives a more complete list of the associated objects. 
\begin{table}[h!]
\caption{This table gives the objects associated to a Niemeier root system $X$}
\arraycolsep=10pt\def\arraystretch{1.5}
\begin{tabular}{| c |  l |}
\hline
$L^X$ & The Niemeier lattice corresponding to $X$ \\
\hline
$m$ & The Coxeter number of all irreducible components of $X$ \\
\hline
$W^X$ & The Weyl group of $X$ \\
\hline
$G^X := \Aut(L^X)/W^X$ & The umbral group corresponding to $X$ \\
\hline
$\pi^X$ & The (formal) product of Frame shapes of Coxeter elements of \\
& irreducible components of $X$ \\
\hline
$\Gamma^X$ & The genus zero subgroup attached to $X$ \\
\hline
$T^X$ & The normalized Hauptmodul of $\Gamma^X$, whose eta-product expansion \\ & corresponds to $\pi^X$ \\
\hline
$\ell$ & The lambency.  A symbol that encodes the genus zero group $\Gamma^X$.  \\& Sometimes used instead of $X$ to denote which case of umbral moonshine \\
& is being considered. \\
\hline
$\psi^X$ & The unique meromorphic Jacobi form of weight 1 and index $m$ \\
& satisfying certain conditions.\\
\hline
$H^X$ & The vector-valued mock modular form of weight 1/2 whose $2m$ components \\
&  furnish the theta expansion of the finite part of $\psi^X$.   \\
& Called the umbral mock modular form. \\
\hline
$S^X$ & The vector-valued cusp form of weight 3/2 which is the shadow of $H^X$.   \\
& Called the umbral shadow. \\
\hline
$H_g^X$ & The umbral McKay-Thompson series attached to $g \in G^X$.  \\
& It is a vector-valued mock modular form of weight 1/2, \\
& and equals $H^X$ when $g$ is the identity. \\
\hline
$S_g^X$ & The vector-valued cusp form {\it conjectured} to be the shadow of $H_g^X$.  \\
\hline
$K^X$ & The {\it conjectural} infinite dimensional graded $G^X$-module whose \\
&  graded super-dimension is encoded by $H^X$. \\
\hline
\end{tabular}
\label{umbral_objects}
\end{table}

The ADE classification of simply laced Dynkin diagrams allows us to classify the irreducible components of the Niemeier root systems $X$.  We will focus on the simplest cases - the root systems of pure $A$-type, i.e. $X = A_{m - 1}^{24/(m - 1)}$, where $(m - 1) \mid 24$. In these cases, the lambency $\ell$ is an integer and equals $m$, and $\Gamma^X = \Gamma_0(m)$. The case $X = A_1^{24}$ corresponds to Mathieu moonshine, with $G^X = M_{24}$ and $H^X = H^{(2)}$, as defined above.  We will generally refer to $H^X$ , $S^X$, $\psi^X$, and $T^X$ as $H^{(m)}$, $S^{(m)}$, $\psi^{(m)}$, and $j_m$ respectively.  These are the main quantities from Table~\ref{umbral_objects} that we will work with, and we will only define them for pure $A$-type.  This is done in Section~\ref{sec:definitions}.

\subsection{The Conjectures of Umbral Moonshine}
The main conjectures of umbral moonshine are as follows: 
\begin{enumerate}
\item The mock modular form $H^X$ encodes the graded super-dimension of a certain infinite-dimensional, $\Z/2m \Z \times \Q$-graded $G^X$-module $K^X$.
\item The graded super-characters $H_g^X$ arising from the action of $G^X$ on $K^X$ are vector-valued mock modular forms with concretely specified shadows $S_g^X$. 
\item The umbral McKay-Thompson series $H_g^X$ are uniquely determined by an optimal growth property which is directly analogous to the genus zero property of monstrous moonshine.  
\end{enumerate}

\section{vector-valued Modular Forms}
In this section, we follow \cite{Bruinier:2010ff} in giving the needed background on vector-valued modular forms, though we state results in less generality.  

\subsection{A Lattice related to $\Gamma_0(m)$}
We will define a lattice $L$ and a dual lattice $L'$ related to $\Gamma_0(m)$ such that the components of our vector-valued modular forms are labeled by the elements of $L'/L$.  

We consider the quadratic space $$V:= \{X \in \Mat_2(\Q): \tr(X) = 0\}$$
with the quadratic form $P(X) := m \det(X)$.\footnote{Note that this corrects a typo in \cite{Bruinier:2010ffz}.} The corresponding bilinear form is then $(X, Y) := -m \tr(XY)$.  
Let $L$ be the lattice
\[
 L:=\left\{ \begin{pmatrix} b& -a/m \\ c&-b \end{pmatrix}; \quad a,b,c\in\Z \right\}.
\]
The dual lattice is then given by
\[
 L':=\left\{ \begin{pmatrix} b/2m& -a/m \\ c&-b/2m \end{pmatrix}; \quad a,b,c\in\Z \right\}.
\]
We will switch between viewing elements of $L'$ as matrices and as quadratic forms, with the matrix $$X = \begin{pmatrix} b/2m& -a/m \\ c&-b/2m \end{pmatrix}$$ corresponding to the integral binary quadratic form $$Q = [mc, b, a] = mcx^2 + bxy + c y^2.$$ Note that then $P(X) = -\Disc(Q)/4m$.  

We identify $L'/L$ with $(\frac{1}{2m}\Z)/\Z$, and the quadratic form $P$ with the quadratic form $\frac{h}{2m} \mapsto \frac{-h^2}{4m}$ on $\Q/\Z$.  We will also occasionally identify $\frac{h}{2m} \in \Q/\Z$ with $h \in \Z/2m\Z$.  

For a fundamental discriminant $D$ and $r/2m \in L'/L$ with $r^2 \equiv D \pmod{4N}$, let  
\begin{equation}
\label{QDR}
Q_{D, r} := \{Q = [mc, b, a]: a, b, c \in \Z, \Disc(Q) = D, b \equiv r \pmod{2m}\}.
\end{equation}
 The action of $\Gamma_0(m)$ on this set is given by the usual action of congruence subgroups on binary quadratic forms.  We will later be working with $Q_{D, r}/\Gamma_0(m)$.  

\subsection{The Weil representation}
By $\Mp_2(\Z)$ we denote the integral metaplectic group. It consists of pairs
$(\gamma, \phi)$, where $\gamma = {\smallabcd \in \Sl_2(\Z)}$ and $\phi:\h\rightarrow \C$
is a holomorphic function with $\phi^2(\tau)=c\tau+d$.
The group $\widetilde{\G}:=\Mp_2(\Z)$ is generated by $S:=(\smallSmatrix,\sqrt{\tau})$ and $T:=(\smallTmatrix, 1)$. 

We consider the \textit{Weil representation} $\rho_L$ of $\Mp_2(\Z)$ 
corresponding to the discriminant form $L'/L$. We denote the standard basis elements of $\C[L'/L]$ by $\e_h$, $h/2m \in L'/L$.  Then the Weil representation $\rho_L$ associated with the discriminant form $L'/L$ is the unitary representation of $\widetilde{\Gamma}$ on $\C[L'/L]$ defined by 

\begin{equation*}
 \rho_L(T) \e_h = e(h^2/4m) \e_h,
\end{equation*}
and
\begin{equation*}
 \rho_L(S) \e_h = \frac{e(-1/8)}{\sqrt{2m}}
				\sum_{h' \in \Z/2m\Z} e(hh'/2m)\e_{h'}.
\end{equation*}

\subsection{Harmonic weak Maass forms}
If $f\colon \H \to \C[L'/L]$ is a function, we write $$f = \sum_{h \in \Z/2m\Z} f_h \e_h$$ for its decomposition into components.  For $k \in \frac{1}{2} \Z$, let $M_{k, \rho_L}^!$ denote the space of $\C[L'/L]$ valued weakly holomorphic modular forms of weight $k$ and type $\rho_L$ for the group $\widetilde{\Gamma}$.  The subspaces of holomorphic modular forms (resp. cusp forms) are denoted by $M_{k, \rho_L}$ (resp. $S_{k, \rho_L}$).  Now, assume that $k \leq 1$.  A twice continuously differentiable function $f: \H \to \C[L'/L]$ is called a \textit{harmonic weak Maass form} (of weight $k$ with respect to $\widetilde{\Gamma}$  and $\rho_L$) if it satisfies: 
\begin{enumerate}
\item $f(M\tau) = \phi(\tau)^{2k} \rho_L(M, \phi) f(\tau)$ for all $(M, \phi) \in \widetilde{\Gamma}$; 
\item $\Delta_k f = 0$;
\item There is a polynomial $$P_f(\tau) = \sum_{h \in \Z/2m\Z} \sum_{\substack{n \in \Z - \frac{h^2}{4m}, \\ -\infty << n \leq 0}} c^+(n, h)e(n\tau) \e_h$$ such that $$f(\tau)-P_f=O(e^{-\epsilon v})$$  for some $\epsilon >0$ as $v\to +\infty$. 
\end{enumerate}

Note here that $$\Delta_k := -v^2\pa{\frac{\partial^2}{\partial u^2} + \frac{\partial^2}{\partial v^2}} + i k v \pa{\frac{\partial}{\partial u} + i \frac{\partial}{\partial v}}$$ is the usual weight $k$ hyperbolic Laplace operator, and that $\tau = u + i v$. We denote the vector space of these harmonic weak Maass forms by $\calH_{k, \rho_L}$.   The Fourier expansion of any $f \in \calH_{k, \rho_L}$ gives a unique decomposition $f = f^+ + f^-$, where
\begin{eqnarray}
f^+(\tau) &=& \sum_{h \in \Z/2m\Z} \sum_{\substack{n \in \Z - \frac{h^2}{4m}, \\  -\infty << n}} c^+(n, h)e(n\tau) \e_h, \\
f^-(\tau) &=& \sum_{h \in L'/L} \sum_{\substack{n \in \Q, \\ n < 0}} c^-(n, h)W(2 \pi n v) e(n \tau) \e_h, 
\end{eqnarray}
and $W(x) := \int_{-2x}^\infty e^{-t}t^{-k} dt = \Gamma(1 - k, 2|x|)$ for $x < 0$.  Then $f^+$ is called the \textit{holomorphic part} and $f^-$ the \textit{nonholomorphic part} of $f$. The polynomial $P_f$ is also uniquely determined by $f$ and is called its \textit{principal part}. We define a \textit{mock modular form} of weight $k$ to be the holomorphic part $f^+$ of a harmonic weak Maass form $f$ of weight $k$ which has $f^- \neq 0$.  Its weight is just the weight of the harmonic weak Maass form.

Recall that there is an antilinear differential operator defined by 
$$\xi_k: \calH_{k, \overline{\rho}_L} \to S_{2 - k, \rho_L}, \; \; f(\tau) \mapsto \xi_k(f)(\tau) := 2iy^k\overline{\frac{\partial}{\partial\overline{\tau}}},$$
where $\overline{\rho}_L$ is the complex conjugate representation. 
The Fourier expansion of $\xi_k(f)$ is given by 
$$\xi_k(f) = -\sum_{h \in \Z/2m\Z} \sum_{n \in \Q, n > 0}(4 \pi n)^{1 - k} \overline{c^-(-n, h)} q^n \e_h.$$
The kernel of $\xi_k$ is equal to $M^!_{k, \overline{\rho}_L}$, and we have the following exact sequence:
$$0 \to M^!_{k, \overline{\rho}_L} \to \calH_{k, \overline{\rho}_L} \to S_{2 - k, \rho_L} \to 0.$$
We call $\xi_k(f)$ the \textit{shadow} of $f$.  Note that $\xi_k(f)$ uniquely determines $f^-$, but the $f^+$ is only determined up to the addition of a weakly holomorphic modular form.  

\section{Defining the Umbral mock modular Forms}
\label{sec:definitions}
In this section we define the mock modular forms $H^{(m)}$ from umbral moonshine, as well as their shadows $S^{(m)}$ and non-holomorphic parts.  Note that we only give definitions for the pure $A$-type cases - see \cite{Cheng:2013vr} for a more detailed and general definition.  We also refer the reader to the appendix for definitions of $\varphi_1^{(m)}(\tau, z), \mu_{m, 0}(\tau, z), \theta_{m, r}(\tau, z), \text{ and } R(u; \tau)$. 

For each lambency $m \in \{2, 3, 4, 5, 7, 9, 13, 25\}$, which correspond to the pure $A$-type cases, define the Jacobi form $\psi^{(m)}$ by 
$$\psi^{(m)}(\tau, z) := c_m \varphi_1^{(m)}(\tau, z) \mu_{1, 0}(\tau, z),$$ 
where $c_m = 2$ for $m = 2, 3, 4, 5, 7, 13$ and $c_m = 1$ for $m = 9, 25$.  We can break up $\psi^{(m)}$ into a finite part $\psi_F^{(m)}$ and a polar part $\psi_P^{(m)}$.  The polar part is given by 
$$\psi_P^{(m)}(\tau, z) = \frac{24}{m - 1} \mu_{m, 0}(\tau, z).$$
Then the mock modular form $H^{(m)}$ is defined by 
\begin{equation}
\psi_F^{(m)}(\tau, z) = \psi^{(m)}(\tau, z) - \psi_P^{(m)}(\tau, z) = \sum_{h \in \Z/2m\Z} H_h^{(m)}(\tau) \theta_{m, h}(\tau, z),
\end{equation}
where $$\theta_{m, h}(\tau, z) := \sum_{n \equiv h \pmod{2m}} q^{n^2/4m} y^k.$$  Note that $\psi^{(m)}$ satisfies an optimal growth condition, which is that 
\begin{equation}
\label{optimal}
q^{1/4m} H_h^X(\tau) = O(1)
\end{equation}
 as $\tau \to i \infty$ for all $h \in \Z/2m\Z$. 

We also define the shadow $S^{(m)}(\tau)$, the non-holomorphic part $F_r^{(m)}(\tau)$, and the harmonic weak Maass form $\widehat{H}^{(m)}(\tau)$ corresponding to the mock modular form $H^{(m)}$ via their components:
\begin{align} S_h^{(m)}(\tau) &:= \sum_{n \equiv h \pmod{2m}} n q^{n^2/4m},  \\
F_h^{(m)}(\tau) &:= \int_{-\overline{\tau}}^{i \infty} \frac{S_h^{(m)}(z)}{\sqrt{-i(z + \tau)}} dz \\
& =  -2 m q^{-(h-m)^2/4m} R\left(\frac{h - m}{2m} (2m\tau) + \frac{1}{2}; 2m\tau \right), \text{ and} \nonumber\\
\widehat{H}_h^{(m)}(\tau) &:= H_h^{(m)}(\tau) + F_h^{(m)}(\tau)
\end{align}

Note that by definition, $S_h^{(m)}(\tau) = -S_{-h}^{(m)}(\tau)$.  Therefore, $S_0^{(m)} = S_m^{(m)} = 0$.  The same is true of $H_h^{(m)}$.  We can write this in terms of Shimura's theta functions as $S_{h}^{(m)}(\tau) = \theta(\tau; h, 2m, 2m, x)$ \cite{Shimura:1973uk}.  Then using the transformation laws for his $\theta$-functions, we get that $S^{(m)}$ transforms as follows:
\begin{align*}
S^{(m)}_h(\tau + 1) &= e(h^2/4m)S^{(m)}_h(\tau), \text{ and}\\
S^{(m)}_h(-1/\tau) &= \tau^{3/2} \frac{e(-1/8)}{\sqrt{2m}} \sum_{k \pmod{2m}} e(kh/2m) S^{(m)}_k(\tau).
\end{align*}
Thus, we have
\begin{align*}
S^{(m)}(\tau + 1) &= \rho_L(T) S^{(m)}(\tau), \text{ and} \\
S^{(m)}(-1/\tau) &= \tau^{3/2} \rho_L(S) S^{(m)}(\tau).
\end{align*}

From these transformations, we see that $S^{(m)}(\tau): \H \to \C[L'/L]$ is a weight 3/2 vector-valued modular form transforming under the Weil representation $\rho_L$, i.e. an element of the space $M_{3/2, \rho_L}$.  From  \cite{Cheng:2013vr}, we know that $H^{(m)}$ is a mock modular form with shadow $S^{(m)}$.\footnote{In fact, it is the only vector-valued mock modular form with shadow $S^{(m)}$ satisfying the optimal growth condition in \ref{optimal}.} This gives us the following theorem. 

\begin{theorem}
We have that $\widehat{H}^{(m)}(\tau): \H \to \C[L'/L]$ is a weight 1/2 vector-valued harmonic weak Maass form transforming under the Weil representation $\overline{\rho}_L$, i.e., it is an element of $\calH_{1/2, \overline{\rho}_L}$. Moreover, it has shadow $S^{(m)}(\tau)$, non-holomorphic part $F^{(m)}$, and principal part $P(\tau) = - 2 q^{-1/4m} (\e_1 - \e_{2m - 1})$.
\end{theorem}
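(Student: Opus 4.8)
The plan is to unwind the definition of a weight-$1/2$, type-$\overline\rho_L$ harmonic weak Maass form and verify its three conditions for $\widehat H^{(m)}=H^{(m)}+F^{(m)}$ componentwise, taking as the main external input the cited fact from \cite{Cheng:2013vr} that $H^{(m)}$ is the unique vector-valued mock modular form with shadow $S^{(m)}$ obeying the optimal growth condition \eqref{optimal}, together with the modularity of $S^{(m)}\in M_{3/2,\rho_L}$ proved just above and Zwegers' transformation laws for $R(u;\tau)$ from the appendix.

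For condition (1) I would work from both descriptions of $F^{(m)}_h$: the period integral $\int_{-\overline\tau}^{i\infty}S^{(m)}_h(z)/\sqrt{-i(z+\tau)}\,dz$ and the closed form $-2m\,q^{-(h-m)^2/4m}R\big(\tfrac{h-m}{2m}(2m\tau)+\tfrac12;\,2m\tau\big)$. Since $F^{(m)}$ is exactly the non-holomorphic Eichler integral of the shadow, the failure of $F^{(m)}$ to be modular under the generators $S,T$ of $\widetilde\Gamma=\Mp_2(\Z)$ is a period integral of $S^{(m)}$, which by the defining property of a mock modular form is cancelled by the corresponding failure of $H^{(m)}$; concretely this is the content of Zwegers' completion $\widehat\mu$ of $\mu_{1,0}$, so I would push his $S$- and $T$-transformations of $\widehat\mu$ through $\psi^{(m)}=c_m\varphi_1^{(m)}\mu_{1,0}$ and the theta-decomposition $\psi_F^{(m)}=\sum_h H^{(m)}_h\theta_{m,h}$, then use the transformation of the index-$m$ theta functions $\theta_{m,h}$ to extract the transformation of the vector $(\widehat H^{(m)}_h)_h$. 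The weight drops from $3/2$ to $1/2$, the automorphy factor becomes $\phi(\tau)^{2\cdot(1/2)}=\phi(\tau)$, and the representation becomes the \emph{conjugate} $\overline\rho_L$ because the Eichler integrand depends conjugate-linearly on the shadow.

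For condition (2) I would use $\Delta_{1/2}=-\xi_{3/2}\circ\xi_{1/2}$: the holomorphic part $H^{(m)}$ is in $\ker\xi_{1/2}$, while a Leibniz-rule computation of $\partial_{\overline\tau}F^{(m)}_h$ gives $\xi_{1/2}(F^{(m)}_h)=S^{(m)}_h$ up to the fixed normalizing constant, so $\xi_{1/2}(\widehat H^{(m)})$ is a multiple of the holomorphic form $S^{(m)}$ and is therefore killed by $\xi_{3/2}$; this also records that the shadow of $\widehat H^{(m)}$ is $S^{(m)}$ and its non-holomorphic part is $F^{(m)}$. For condition (3), each $F^{(m)}_h$ decays exponentially as $v\to\infty$ (the $n<0$ terms $W(2\pi n v)e(n\tau)$ are $O(e^{-\epsilon v})$), so the principal part of $\widehat H^{(m)}$ equals that of $H^{(m)}$; by \eqref{optimal} no component of $H^{(m)}$ has a Fourier term below $q^{-1/4m}$, the symmetry $H^{(m)}_h=-H^{(m)}_{-h}$ forces $H^{(m)}_0=H^{(m)}_m=0$ and pairs the $\e_1$, $\e_{2m-1}$ leading coefficients with opposite signs and kills all would-be constant terms, and the value $-2$ is read off the definition of $\psi_F^{(m)}$ (equivalently from $H^{(2)}=2q^{-1/8}(-1+45q+\cdots)$ in the base case), giving $P(\tau)=-2q^{-1/4m}(\e_1-\e_{2m-1})$.

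I expect condition (1) to be the only real obstacle: translating Zwegers' scalar transformation formulas for $\widehat\mu$ (or $R$) into the vector-valued transformation of $\widehat H^{(m)}$ under all of $\Mp_2(\Z)$ requires care to produce exactly $\overline\rho_L$ rather than $\rho_L$ and to keep track of the period integrals that appear under $S$; conditions (2) and (3) are routine once (1) is in place.
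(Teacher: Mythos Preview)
Your proposal is substantially more detailed than what the paper does. The paper does not give a proof: it establishes just above the theorem that $S^{(m)}\in M_{3/2,\rho_L}$ via Shimura's transformation laws, then simply writes ``From \cite{Cheng:2013vr}, we know that $H^{(m)}$ is a mock modular form with shadow $S^{(m)}$. This gives us the following theorem.'' In other words, the paper treats the whole statement as a formal consequence of the cited source together with the general exact sequence $0\to M^!_{1/2,\overline\rho_L}\to\calH_{1/2,\overline\rho_L}\xrightarrow{\xi_{1/2}}S_{3/2,\rho_L}\to0$. Your plan instead unpacks the three axioms and checks them by hand via Zwegers' transformation laws for $R$ and the factorization $\Delta_{1/2}=-\xi_{3/2}\circ\xi_{1/2}$; this is a legitimate and more self-contained route, and conditions (2) and (3) are indeed routine in the way you describe.

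One small gap to be aware of in your treatment of the principal part: the antisymmetry $H_h^{(m)}=-H_{-h}^{(m)}$ kills constant terms only for $h\equiv -h\pmod{2m}$, i.e.\ $h=0,m$. For $m\in\{9,25\}$ there exist $h\notin\{0,m\}$ with $4m\mid h^2$ (e.g.\ $h=6,12$ when $m=9$), so the antisymmetry by itself only pairs such constant terms rather than annihilating them. To conclude they vanish, and to pin down the coefficient $-2$ uniformly (not just for $m=2$), you should go back to the explicit polar part $\psi_P^{(m)}=\tfrac{24}{m-1}\mu_{m,0}$ and its theta decomposition, or simply cite the explicit Fourier data in \cite{Cheng:2013vr}---which is, in effect, exactly what the paper does.
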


The reason we focus on the lattices of pure $A$-type is because this theorem is not true for the other cases - the vector-valued harmonic weak Maass forms no longer transform under the Weil representation.

\section{Relating umbral and monstrous moonshine}
\label{relating}
In this section, we explain the relationship between the mock modular forms $H^{(m)}$ from umbral moonshine and the Hauptmoduln $T_g$ from monstrous moonshine.  
\subsection{Twisted Generalized Borcherds Products}
\label{subsec:borcherds}
We begin by giving the theorem of Bruinier and Ono we will use.  

Let $c^+(n, h)$ be the $n$-th Fourier coefficient of $H_h^{(m)}$.  Let $(\Delta, r)$ be an admissible pair, so that $\Delta$ is a negative fundamental discriminant and $r^2 \equiv \Delta \pmod{4m}$.  Let $\Psi_{\Delta, r}(\tau, \widehat{H}^{m})$ be the twisted generalized Borcherds product defined in Theorem~\ref{mainthm}.  
\begin{theorem}(Theorem 6.1 in \cite{Bruinier:2010ff})
\label{Bruinier-thm}
We have that $\Psi_{\Delta, r}(\tau, \widehat{H}^{(m)})$ is a weight 0 meromorphic modular function on $\Gamma_0(m)$ with divisor $Z_{\Delta, r}(\widehat{H}^{(m)}).$\end{theorem}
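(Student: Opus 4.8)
The plan is to deduce the statement as a direct application of Theorem~6.1 of \cite{Bruinier:2010ff}; the work consists entirely in checking that $\widehat H^{(m)}$ satisfies the hypotheses of that theorem and in translating its conclusion into the concrete lattice used here. The essential input is the theorem immediately preceding this one, which asserts precisely that $\widehat H^{(m)} \in \calH_{1/2,\overline\rho_L}$, i.e.\ that $\widehat H^{(m)}$ is a weight $1/2$ harmonic weak Maass form for the Weil representation $\overline\rho_L$ attached to the discriminant form $L'/L$, with shadow $S^{(m)}$ and principal part $P(\tau) = -2q^{-1/4m}(\e_1 - \e_{2m-1})$. Since the rational quadratic space $V = \{X \in \Mat_2(\Q):\tr(X) = 0\}$ with $P(X) = m\det(X)$ has signature $(1,2)$, its symmetric space is a copy of $\H$, and the lattice $L$ was constructed precisely so that the attached orthogonal modular group is $\Gamma_0(m)$; hence a Borcherds-type lift of $\widehat H^{(m)}$ is a meromorphic modular form on $\Gamma_0(m)$, which is the group occurring in the conclusion.

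I would then verify the remaining hypotheses of Theorem~6.1 of \cite{Bruinier:2010ff}. First, its input is required to transform under $\overline\rho_L$ and to have integral principal-part coefficients; by the preceding theorem the only nonzero polar coefficients of $\widehat H^{(m)}$ are $c^+(-1/4m,\pm 1) = \mp 2$, which are integral, and the optimal growth condition \eqref{optimal} is exactly what guarantees that no other polar terms appear in any component. Second, the weight of the lift is governed by the constant term $c^+(0,0)$ of the $\e_0$-component; since $S_0^{(m)} = 0$ forces $H_0^{(m)} = 0$ and likewise $F_0^{(m)} = 0$, we have $\widehat H_0^{(m)} \equiv 0$, so $c^+(0,0) = 0$ and the lift has weight $0$, i.e.\ it is a modular \emph{function}. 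Third, the twist data must be admissible: $\Delta$ is a negative fundamental discriminant with $r^2 \equiv \Delta \pmod{4m}$, which is our standing hypothesis and is precisely what makes the twisted product $\Psi_{\Delta,r}$ — assembled from the finite products $P_\Delta(x)$, which encode the genus character associated to $\Delta$ acting on $Q_{D,r}/\Gamma_0(m)$ — well defined.

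Granting these checks, Theorem~6.1 of \cite{Bruinier:2010ff} gives that $\Psi_{\Delta,r}(\tau,\widehat H^{(m)})$ is a weight $0$ meromorphic modular function on $\Gamma_0(m)$ whose divisor is the twisted Heegner divisor $Z_{\Delta,r}(\widehat H^{(m)})$ read off from the principal part; because that principal part is a single power $q^{-1/4m}$ supported in the components $h = 1, 2m-1$, this divisor is concentrated on CM points of discriminant $\Delta$ on $\Gamma_0(m)\backslash\H$, consistently with Theorem~\ref{mainthm}. I expect the only real obstacle to be bookkeeping: reconciling the normalizations of $L$, $L'/L$, $\rho_L$ versus $\overline\rho_L$, and of the genus character used here with those of \cite{Bruinier:2010ff} (the normalization of $P$ on $V$ already required a footnoted correction to the literature), and checking that the Nebentypus $\left(\frac{\Delta}{\,\cdot\,}\right)$ a priori attached to a twisted Borcherds lift is trivial on $\Gamma_0(m)$ for admissible $(\Delta,r)$, so that the lift really is a modular function on $\Gamma_0(m)$ and not merely modular up to a character. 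A secondary point worth isolating is that the single vector-valued condition \eqref{optimal} bounds the principal part of $H^{(m)}$ simultaneously at all cusps of $\Gamma_0(m)$, which is what ensures the divisor involves only discriminant $\Delta$ and no intermediate discriminants.
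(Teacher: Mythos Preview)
The paper does not prove this statement; it is quoted verbatim as Theorem~6.1 of \cite{Bruinier:2010ff}, with the preceding theorem in the paper supplying the key input that $\widehat H^{(m)}\in\calH_{1/2,\overline\rho_L}$. Your proposal takes exactly this route---cite Bruinier--Ono and check the hypotheses---and is, if anything, more explicit than the paper about the verification. One minor correction: for the \emph{twisted} Borcherds lift with a fundamental discriminant $\Delta\neq 1$, the weight of $\Psi_{\Delta,r}$ is automatically $0$ in Bruinier--Ono's theorem, independently of $c^+(0,0)$; your computation that $c^+(0,0)=0$ is correct but not the reason the lift has weight $0$ here.
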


For this theorem to make sense, we need to define the twisted Heegner divisor $Z_{\Delta, r}(\widehat{H}^{(m)})$ associated to $\widehat{H}^{(m)}$.  It is defined by
$$Z_{\Delta, r}(\widehat{H}^{(m)}) := \sum_{h \in \Z/2m\Z} \sum_{n < 0} c^+(n, h) Z_{\Delta, r}(n, h).$$
Since the principal part of $\widehat{H}^{(m)}$ is $- 2 q^{-h^2/4m} (\e_1 - \e_{2m - 1})$, this means that 
$$Z_{\Delta, r}(\widehat{H}^{(m)}) = 2 Z_{\Delta, r}\pa{\frac{-1}{4m}, \frac{-1}{2m}} - 2 Z_{\Delta, r}\pa{\frac{-1}{4m}, \frac{1}{2m}}.$$ 
Now, we just have to compute the divisors $Z_{\Delta, r}\pa{\frac{-1}{4m}, \frac{h}{2m}}$.  They are defined as follows.
$$Z_{\Delta, r}\left(\frac{-1}{4m}, \frac{h}{2m}\right) := \sum_{Q \in Q_{\Delta, hr}/\Gamma_0(m)} \frac{\chi_{\Delta}(Q)}{w(Q)} \alpha_Q, $$
where $w(Q) = 2$ for $\Delta < -4$, $\chi_\Delta$ is the generalized genus character defined in Gross-Kohnen-Zagier, and $\alpha_Q$ is the unique root of $Q(x, 1)$ in $\H$.  

\subsection{Proofs of Theorem~\ref{mainthm} and Corollary~\ref{twisted_trace}}
\begin{proof}[Proof of Theorem~\ref{mainthm}]
Theorem~\ref{Bruinier-thm} gives us that $\Psi_{\Delta, r}(\tau, \widehat{H}^{(m)})$ is a weight 0 meromorphic modular function on $\Gamma_0(m)$ with specified divisor, which is a discriminant $\Delta$ Heegner divisor.  For all of our $m$, $\Gamma_0(m)$ has genus zero.  Therefore, $\Psi_{\Delta, r}(\tau, \widehat{H}^{(m)})$ is a rational function in the Hauptmodul for $\Gamma_0(m)$.  The normalized Hauptmodul, which we call $j_m(\tau)$, is defined by 
\begin{equation}
j_m(\tau) := \frac{\eta(\tau)^{24/(m - 1)}}{\eta(m\tau)^{24/(m - 1)}} + \frac{24}{m - 1}.
\end{equation}
But using Table~\ref{intro-umbral}, we see that $j_m(\tau)$ is equal to $T_{g(X)}(\tau)$, the graded trace of $g(X) \in M$ on $V$.  
\end{proof}

\begin{proof}[Proof of Corollary~\ref{twisted_trace}]
From Theorem~\ref{mainthm}, we have that  $$\prod_{n  = 1}^\infty P_\Delta(q^n)^{c^+\pa{\frac{|\Delta| n^2}{4 m}, \frac{rn}{2m}}} = \prod_i (T_g(\tau) - T_g(\alpha_i))^{\gamma_i}.$$ We equate the $q^1$ Fourier coefficients of each side, using Table~\ref{monster} to get the Fourier expansion $$T_g(\tau) = \frac{1}{q}  + O(q).$$ 
\end{proof}

\subsection{Examples}
For each pure A-type case $X$ with coxeter number $m$, we illustrate how to write $\Psi_{\Delta, r}(\tau, \widehat{H}^{(m)})$ as a rational function in $j_m$.  Note that here $\Delta < 0$ is a fundamental discriminant and $r \in \Z$ is such that $\Delta \equiv r^2 \pmod{4m}$.  

First we work out an example for $m = 2$ in some detail, then list one example for each $m$.  In Section~\ref{subsec:quadratic_forms}, we explain how to find representatives of $Q_{\Delta, r}/\Gamma_0(m)$ using a method of Gross, Kohen, and Zagier.  

Consider the case $m = 2, \Delta = -7, r = 1$.  Using the method of Section~\ref{subsec:quadratic_forms}, we compute that $Q_{-7, 1}/\Gamma_0(2) = \{Q_1, Q_2\}$ and that $Q_{-7, -1}/\Gamma_0(2) = \{-Q_1, -Q_2\}$, where the quadratic forms $Q$, their Heenger points $\alpha_Q$, and their generalized genus characters $\chi_\Delta(Q)$ are given in Table~\ref{quadratic_form_example}.  We also include the value of $j_2$ at each Heegner point.  
\begin{table}[h!]
\caption{Quadratic forms needed for $m = 2$, $\Delta = -7, r = 1$ case}
\[
\arraycolsep=10pt\def\arraystretch{2.2}
\begin{array}{| c | c | c | c |}
\hline 
\text{quadratic form} = Q & \alpha_Q &  \chi_{\Delta}(Q) & j_2(\alpha_Q) \\
\hline \hline
Q_1 = [ 2, 1, 1] & \alpha_1 = \frac{-1 + \sqrt{-7}}{4} & 1 &\gamma_1 := \frac{1 + 45 \sqrt{-7}}{2}  \\
\hline
Q_2 = [-2, 1, -1]& \alpha_2 = \frac{1 + \sqrt{-7}}{4} & - 1 & \gamma_2 := \frac{1 - 45 \sqrt{-7}}{2} \\
\hline
-Q_2 & \alpha_2 & 1 & \gamma_2\\
\hline
-Q_1  & \alpha_1 & - 1& \gamma_1 \\
\hline
\end{array}
\]
\label{quadratic_form_example}
\end{table}
Using the table, the divisor of $\Psi_{-7, 1}(\tau)$ is given by: 
$$(-\alpha_1 + \alpha_2) -(\alpha_1 - \alpha_2) = 2 \alpha_2 - 2 \alpha_1.$$
Therefore, 
$$\Psi_{-7, 1}(\tau, \widehat{H}^{(2)}) = \frac{(j_2(\tau) - \gamma_2)^2}{(j_2(\tau) - \gamma_1)^2}.$$

Similarly, for each value of $m$ corresponding to a pure A-type case, we demonstrate in Table~\ref{borcherd_examples} how to write $\Psi_{\Delta, r}(\tau, \widehat{H}^{(m)})$ as a rational function in $j_m$ for some nice choice of $\Delta, r$.  In all the examples we consider, $$\Psi_{\Delta, r}(\tau, \widehat{H}^{(m)}) =\frac{(j_m(\tau) - \gamma_2)^2}{(j_m(\tau) - \gamma_1)^2}$$ for some $\gamma_1, \gamma_2 \in \calO_{\Q(\sqrt{\Delta})}$. Note that $\Psi_{\Delta, r}$ will not always be a rational function of this particular form - we always picked $\Delta$ with class number 1.  

\begin{table}
\caption{Examples}
\label{borcherd_examples}
\[
\arraycolsep=10pt\def\arraystretch{2.2}
\begin{array}{| c | c | c | c | c |}
\hline
m & \Delta & r & \gamma_1 & \gamma_2 \\
\hline \hline
2 & -7 & 1 &  \frac{1 + 45 \sqrt{-7}}{2} & \frac{1 - 45 \sqrt{-7}}{2} \\
\hline
3 & -11 & 1 & 17 + 8 \sqrt{-11} & 17- 8 \sqrt{-11} \\
\hline
4 & -7 & 3 & \frac{-15 +3\sqrt{-7}}{2} & \frac{-15 - 3 \sqrt{-7}}{2} \\
\hline
5 & -11 & 3 & -3 + 2 \sqrt{-11}& -3 - 2 \sqrt{-11} \\
\hline
7 & -19 & 3 & \frac{3 + 3 \sqrt{-19}}{2}& \frac{3 - 3 \sqrt{-19}}{2}\\
\hline
9 & -11 & 5 & -1 + \sqrt{-11} & -1 - \sqrt{-11} \\ 
\hline
13 & -43 & 3 & \frac{7 +  \sqrt{-43}}{2} & \frac{7 -  \sqrt{-43}}{2} \\
\hline
25 & -19 & 9 & \frac{\sqrt{-19}}{2} & \frac{- \sqrt{-19}}{2} \\
\hline
\end{array}
\]
\end{table}

\subsection{Computing the elements in $Q_{\Delta, r}/\Gamma_0(m)$}
\label{subsec:quadratic_forms}

In this section, we explain how to compute $Q_{\Delta, r}/\Gamma_0(m)$, following \cite{Gross:1987ul}.  

Let $Q_{\Delta, r}^0$ be the subset of primitive forms. Then we have a $\Gamma_0(m)$-invariant bijection of sets 
$$Q_{\Delta, r} = \bigcup_{\ell^2 \mid \Delta} \left( \bigcup_{h \in S(\ell)} \ell Q_{\Delta/\ell^2, h}^0\right),$$
where $S(\ell) := \{h \in \Z/2m \Z: h^2 \equiv \Delta/\ell^2 \pmod{4m}, \ell h \equiv r \pmod{2m}\}.$ 
Since we pick $\Delta$ to be a fundamental discriminant, the only possible prime we need to worry about is $\ell = 2$.  In our examples, we always choose $\Delta, r$ such that $S(2) = \emptyset$.  In this case, we just need to work with $Q_{\Delta, r}^0$.  

Now, let $n := \left(m, r, \frac{r^2 - \Delta}{4m}\right)$.  Then for $Q = [mc, b, a] \in Q_{\Delta, r}^0$, define $n_1 := (m, b, a), n_2 :=(m, b, c)$, which are coprime and have product $n$.  We have the following result: 

\begin{lemma}(Section 1.1 of \cite{Gross:1987ul})
Define $n$ as above and fix a decomposition $n = n_1 n_2$ with $n_1, n_2$ positive and relatively prime.  Then there is a 1:1 correspondence between the $\Gamma_0(m)$-equivalence classes of forms $[cm, b, a] \in Q_{\Delta, r}^0$ satisfying $(m, b, a) = n_1, (m, b, c) = n_2$ and the $\SL_2(\Z)$ equivalence classes of forms in $Q_{\Delta}^0$ given by $Q = [mc, b, a] \mapsto \tilde{Q} = [c m_1, b, a m_2]$, where $m_1 \cdot m_2$ is any decomposition of $m$ into coprime positive factors satisfying $(n_1, m_2) = (n_2, m_1) = 1$.  In particular, $|Q_{\Delta, r}^0/\Gamma_0(m)| = 2^v |Q_\Delta^0/\SL_2(\Z)|$, where $v$ is the number of prime factors of $n$.  
\end{lemma}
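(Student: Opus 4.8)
The plan is to prove this following \cite{Gross:1987ul}, as a dictionary between the $\Gamma_0(m)$-action on forms $[mc,b,a]$ and the $\SL_2(\Z)$-action on binary quadratic forms of discriminant $\Delta$, organized by the invariants $n_1,n_2$. Combined with the $\Gamma_0(m)$-equivariant decomposition $Q_{\Delta,r}=\bigcup_{\ell^2\mid\Delta}\bigcup_{h\in S(\ell)}\ell\,Q^0_{\Delta/\ell^2,h}$ recorded above, this transfers the count from primitive forms to all of $Q_{\Delta,r}$ (when $\Delta$ is fundamental only $\ell=1$ contributes). So I work throughout with a primitive $Q=[mc,b,a]\in Q^0_{\Delta,r}$, where $b^2-4mac=\Delta$ and $b\equiv r\pmod{2m}$.

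I would first nail down the arithmetic of $n_1=(m,b,a)$ and $n_2=(m,b,c)$. Coprimality is immediate from primitivity: a common prime $p$ of $n_1$ and $n_2$ would divide $a$, $b$, $c$ and $m$, hence $\gcd(mc,b,a)=1$, which is impossible. For $n_1n_2=n$, use $ac=\frac{b^2-\Delta}{4m}$ and $(m,b)=(m,r)$ to rewrite $n=(m,b,ac)$, and then check $v_p(n)=v_p(n_1)+v_p(n_2)$ for every prime $p$ from the valuation identity $\min(v_p m,v_p b,v_p a+v_p c)=\min(v_p m,v_p b,v_p a)+\min(v_p m,v_p b,v_p c)$, which holds once one knows at most one summand on the right is nonzero. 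Next I would verify that $n_1$ and $n_2$ are constant on $\Gamma_0(m)$-orbits: for $M\in\Gamma_0(m)$ the lower-left entry is divisible by $m$ and, since $\det M=1$, the diagonal entries are units mod $m$; writing $Q\circ M=[mc',b',a']$ and reducing the transformation formulas modulo $m$, a short gcd computation gives $(m,b',a')=(m,b,a)$ and $(m,b',c')=(m,b,c)$. Hence $Q^0_{\Delta,r}/\Gamma_0(m)$ splits into blocks indexed by the $2^v$ coprime factorizations $n=n_1n_2$, and the cardinality claim reduces to a bijection between each block and $Q^0_\Delta/\SL_2(\Z)$.

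That bijection is the core. Fix a factorization $n=n_1n_2$ and a coprime factorization $m=m_1m_2$ with $(n_1,m_2)=(n_2,m_1)=1$ (which exists because $n_1,n_2$ are coprime: assign each prime power of $m$ dividing $n_1$ to $m_1$, each dividing $n_2$ to $m_2$, the rest arbitrarily), and set $Q=[mc,b,a]\mapsto\tilde Q=[cm_1,b,am_2]$. Then $\Disc(\tilde Q)=b^2-4m_1m_2ac=\Delta$ at once, and $\tilde Q$ is primitive (automatic when $\Delta$ is fundamental, otherwise from the same gcd bookkeeping). One direction of the group compatibility is easy: the identity $\tilde Q(m_2x,y)=m_2\,Q(x,y)$ exhibits $\tilde Q$ as $Q$ precomposed with $D:=\left(\begin{smallmatrix}m_2&0\\0&1\end{smallmatrix}\right)$ and rescaled, and $D\,\Gamma_0(m)\,D^{-1}\subseteq\SL_2(\Z)$ since $m_2\mid m$ divides the lower-left entry of every element of $\Gamma_0(m)$; hence $\Gamma_0(m)$-equivalent $Q$'s have $\SL_2(\Z)$-equivalent $\tilde Q$'s. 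The step I expect to be the main obstacle is the converse together with surjectivity: one must show that $\tilde Q\sim_{\SL_2(\Z)}\tilde{Q'}$ forces $Q\sim_{\Gamma_0(m)}Q'$, and that every $\SL_2(\Z)$-class in $Q^0_\Delta$ arises. For this I would, starting from an arbitrary primitive $[A',b',C']$ of discriminant $\Delta$, use $\SL_2(\Z)$-translations to reach a representative with $m_1\mid A'$, $m_2\mid C'$ and $b'\equiv r\pmod{2m}$ — a simultaneous-congruence argument on the $\SL_2(\Z)$-orbit of the form, and precisely the place where the coprimality of $(n_1,m_2)$ and $(n_2,m_1)$ is used — then set $Q:=[\,m(A'/m_1),\,b',\,C'/m_2\,]$, and check that it lands in the prescribed block of $Q^0_{\Delta,r}$ and that its $\Gamma_0(m)$-class is independent of the representative. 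Granting this, summing the block bijection over the $2^v$ coprime factorizations $n=n_1n_2$ gives $|Q^0_{\Delta,r}/\Gamma_0(m)|=2^v\,|Q^0_\Delta/\SL_2(\Z)|$.
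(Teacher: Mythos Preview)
The paper does not give its own proof of this lemma: it is quoted directly from Gross--Kohnen--Zagier (Section~1.1 of \cite{Gross:1987ul}) and used as a black box for computing representatives of $Q_{\Delta,r}/\Gamma_0(m)$. So there is nothing to compare your argument against in the paper itself.

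That said, your reconstruction is faithful to the GKZ argument and the steps you detail are correct. The coprimality of $n_1,n_2$ from primitivity, the valuation identity giving $n_1n_2=n$, the $\Gamma_0(m)$-invariance of $(n_1,n_2)$, and the conjugation $D\Gamma_0(m)D^{-1}\subset\SL_2(\Z)$ with $D=\left(\begin{smallmatrix}m_2&0\\0&1\end{smallmatrix}\right)$ via the identity $\tilde Q(m_2x,y)=m_2\,Q(x,y)$ are all fine. You correctly flag the one genuinely nontrivial point---surjectivity and injectivity of the induced map on classes---and your proposed fix (translate an arbitrary primitive form of discriminant $\Delta$ within its $\SL_2(\Z)$-orbit to a representative with $m_1\mid A'$, $m_2\mid C'$, $b'\equiv r\pmod{2m}$, using the coprimality hypotheses) is exactly how GKZ handle it. If you want this to stand alone as a proof rather than a sketch, that congruence step and the independence of the $\Gamma_0(m)$-class from the chosen representative need to be written out; as it stands you have correctly identified where the work lies but not carried it through.
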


Note that $|Q_\Delta^0/\SL_2(\Z)|$ equals $2h(\Delta)$ for $\Delta < 0$, where the factor of 2 arises because $Q_\Delta^0$ also contains negative semi-definite forms.  

In our examples, we always choose $\Delta, r$ such that $n = 1$, so that $|Q_{\Delta, r}^0/\Gamma_0(m)| = |Q_\Delta^0/\SL_2(\Z)| = 2 h(\Delta)$, where $h(\Delta)$ is the class number of $\Q(\sqrt{\Delta})$.  The theory of reduced forms allows us to easily compute $Q_\Delta^0/\SL_2(\Z)$.  

\section{$p$-adic properties of the logarithmic derivative}
\subsection{$p$-adic modular forms}
\label{sec:padic-def}
For each $i \in \N$, let $f_i = \sum a_i(n) q^n$ be a modular form of weight $k_i$ with $a_i(n) \in \Q$.  If for each $n$, the $a_i(n)$ converge $p$-adically to $a(n) \in \Q_p$, then $f := \sum a(n)q^n$ is called a $p$-adic modular form.  For $p \neq 2$, we define the weight space $$W := \varprojlim_t \Z/\phi(p^t)\Z = \Z_p \times \Z/(p -1)\Z.$$  For $p = 2$, we define $$W := \varprojlim_t \Z/2^{t - 2}\Z = \Z_2.$$ Then the $k_i$ converge to an element $k \in W$, which we call the weight of $f$. We identify integers by their image in $\Z_p \times \{0\}$.  

\subsection{Proof of Theorem~\ref{mero-modform}}

\begin{proof}[Proof of Theorem~\ref{mero-modform}]
By Theorem \ref{mainthm}, $\Psi_{\Delta,r}(\tau)$ is a meromorphic modular function, so that $\Theta(\Psi_{\Delta, r}(\tau))$ is a weight 2 meromorphic modular form on $\Gamma_0(m)$. Thus, the logarithmic derivative $\frac{\Theta(\Psi_{\Delta, r}(\tau))}{\Psi_{\Delta, r}(\tau)}$ is a weight 2 meromorphic modular form on $\Gamma_0(m)$ whose poles are simple and are supported on Heegner points of discriminant $\Delta.$ 
\end{proof}

\subsection{Proof of Theorem~\ref{padic} and its corollary}

\begin{proof}[Proof of Theorem~\ref{padic}]

We show that if $(\Delta, r)$ is an admissible pair and $p$ is inert or ramified in $\Q(\sqrt{\Delta})$, that $$f_{\Delta, r} := \frac{\Theta(\Psi_{\Delta, r}(\tau))}{\Psi_{\Delta, r}(\tau)}$$
is a $p$-adic modular form of weight 2.  Say $f$ has poles at $\alpha_1, \dots, \alpha_n$, all of which are CM points of discriminant $\Delta$.  For each $\alpha_i$, there is some zero $\beta_i$ of $E_{p -1}$ such that $j(\tau) - j(\alpha_i) \equiv j(\tau) - j(\beta_i)$ (see Theorem 1 of \cite{Kaneko:1998wt}).  Then let $$\mathcal{E}:= E_{p - 1} \prod_{i} \frac{(j(\tau) - j(\alpha_i))}{(j(\tau) - j(\beta_i))}.$$  This has weight $p - 1$, is congruent to 1 modulo $p$, has zeros at $\alpha_1, \dots, \alpha_n$, and has no poles.  Let $f_t := f \mathcal{E}^{(p^t)}$.  Then $f_t \equiv f \pmod{p^t}$ and is a modular form of weight $k_t = 2 + (p - 1)p^t \equiv 2 \pmod{\phi(p^{t + 1})}$, so $f$ is a $p$-adic modular form of weight $2$.
\end{proof}

\begin{proof}[Proof of Corollary~\ref{general-modp-cor}]
This corollary follows directly for the coefficients of any $p$-adic modular form using the following beautiful result, proven by Serre \cite{Serre:1974tp} using the theory of Galois representations.
\begin{lemma}[Serre \cite{Serre:1974tp} Theorem 4.7 (I)]
Let $K$ be a number field and $\mathcal O_K$ the ring of integers of $K$. Suppose $f(\tau)=\sum_{n\geq0}a_nq^n\in\mathcal O_K[[q]]$ is a modular form of integer weight $k\geq1$ on a congruence subgroup. For any prime $p$, let $\mathfrak p$ be a prime above $p$ in $\mathcal O_K$. Let $m \geq 1$.  Then there exists a positive constant $\alpha_m$ such that 
\[\#\left\{n\leq X\colon a_n\not\equiv0\pmod{\mathfrak p}^m\right\}=O\left(\frac{X}{(\log X)^{\alpha_m}}\right).\]
\end{lemma}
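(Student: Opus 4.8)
The plan is to reduce the estimate to the distribution of the coefficients $a_\ell$ at primes, feed in that distribution through a Galois representation, and finish with an elementary sieve; I treat a fixed prime $\mathfrak p\mid p$ and all $m\ge1$ at once. Say $f$ has weight $k$ on $\Gamma_1(N)$ (the case of a general congruence subgroup reduces to this). The Hecke operators $T_\ell$ with $\ell\nmid Np$ are simultaneously diagonalizable on $M_k(\Gamma_1(N))$, acting by the scalar $a_\ell(h)$ on the span of the translates $h(d\tau)$ attached to a newform or Eisenstein newform $h$ of level dividing $N$. Hence, after enlarging $K$ to a number field $K'$ carrying a prime $\mathfrak P\mid\mathfrak p$, one can write $f=\sum_i\sum_d c_{i,d}\,h_i(d\tau)$, so that $a_n(f)=\sum_i\sum_{d\mid n}c_{i,d}\,a_{n/d}(h_i)$; thus $a_n(f)\not\equiv0\pmod{\mathfrak p^m}$ forces $a_{n'}(h_i)\not\equiv0\pmod{\mathfrak P^{m'}}$ for some $i$, some $d\mid n$ and $n'=n/d$, with $m'$ depending only on $m$. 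Since each set $\{n\le X:d\mid n,\ a_{n/d}(h_i)\not\equiv0\}$ injects into $\{n'\le X:a_{n'}(h_i)\not\equiv0\}$, and a finite union of sets of sizes $O(X/(\log X)^{\beta_i})$ is $O(X/(\log X)^{\min_i\beta_i})$, it suffices to prove the bound for a single normalized newform $g$ (cuspidal or Eisenstein), whose coefficients $a_n(g)$ are multiplicative.

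To $g$ attach the associated two-dimensional Galois representation $\rho\colon\Gal(\overline{\Q}/\Q)\to\GL_2(\mathcal{O}_{K'}/\mathfrak P^{m'})$ --- Deligne's construction for weight $\ge2$, Deligne--Serre for weight $1$, and the reducible Eisenstein representation in the Eisenstein case --- unramified outside $Np$, with $\tr\rho(\mathrm{Frob}_\ell)\equiv a_\ell(g)\pmod{\mathfrak P^{m'}}$ for $\ell\nmid Np$. The key observation is that a complex conjugation $c$ satisfies $\det\rho(c)\equiv\chi(-1)(-1)^{k-1}=(-1)^{2k-1}=-1$ (using $\chi(-1)=(-1)^k$ for the nebentypus $\chi$) and $\rho(c)^2=I$, so in characteristic zero $\rho(c)$ has eigenvalues $\{1,-1\}$, whence $\tr\rho(c)=0$ and therefore $\tr\bigl(\rho(c)\bmod\mathfrak P^{m'}\bigr)\equiv0\pmod{\mathfrak P^{m'}}$. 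By the Chebotarev density theorem applied to the finite extension cut out by $\rho\bmod\mathfrak P^{m'}$, the set $S=\{\ell\nmid Np:a_\ell(g)\equiv0\pmod{\mathfrak P^{m'}}\}$ contains every $\ell$ whose Frobenius lies in the conjugacy class of $\rho(c)$, hence has natural density $\delta\ge1/\#\mathrm{Im}(\rho)>0$. (When $g$ has complex multiplication by $\Q(\sqrt\Delta)$ --- the situation of Corollary~\ref{general-modp-cor}, with $p$ inert or ramified there --- one gets $\delta\ge\tfrac12$ directly from the primes inert in $\Q(\sqrt\Delta)$.)

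By multiplicativity of $n\mapsto a_n(g)$ away from $Np$, if some prime $\ell\in S$ divides $n$ exactly to the first power then $a_n(g)\equiv0\pmod{\mathfrak P^{m'}}$. Writing $n=n_0k$ with $n_0$ squarefree, $k$ powerful and $\gcd(n_0,k)=1$, it follows that $a_n(g)\not\equiv0$ forces the squarefree part $n_0$ to have no prime factor in $S$. For each powerful $k$ the number of squarefree $n_0\le X/k$ with no prime factor in $S$ is $O\pa{(X/k)/(\log X)^{\delta}}$ by a standard Tauberian argument (the Selberg--Delange method: $\prod_{\ell\notin S,\ \ell\nmid Np}(1-\ell^{-s})^{-1}$ has the same singularity at $s=1$ as $\zeta(s)^{1-\delta}$, because $S$ has density $\delta$), and summing over powerful $k$ is harmless since $\sum_{k\ \mathrm{powerful}}1/k<\infty$. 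Hence $\#\{n\le X:a_n(g)\not\equiv0\pmod{\mathfrak P^{m'}}\}=O\pa{X/(\log X)^{\delta}}$, and with the reduction of the first paragraph this proves the lemma, with $\alpha_m$ the minimum of the finitely many densities $\delta$ that occur.

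The step I expect to cause the most trouble is making the sieve of the third paragraph rigorous and, above all, extracting an exponent $\alpha_m$ that is positive uniformly in $m$: a priori the proportion of trace-$0$ elements in $\mathrm{Im}(\rho\bmod\mathfrak P^{m'})$ could degenerate as $m'$ grows. What rescues this is the observation that $\rho(c)$ always reduces to a trace-$0$ element, so $\delta\ge1/\#\mathrm{Im}(\rho)$ for every $m$; everything else is standard, the one deep input being the existence of $\rho$, which we cite. Finally, the remaining assertions of Corollary~\ref{general-modp-cor} --- that the bound persists for integral constant multiples of $f_{\Delta,r}$, and that $\pi_{\Delta,r}(x;p^k)/x\to1$ --- are then immediate, since $X/(\log X)^{\alpha_m}=o(X)$.
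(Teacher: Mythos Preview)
The paper does not actually prove this lemma; it is quoted as a black box from Serre, with only the remark that Serre's proof uses ``the theory of Galois representations.'' There is thus no proof in the paper to compare against. What you have written is a faithful reconstruction of Serre's own argument: decompose $f$ over an extension $K'/K$ into eigenform components, attach to each normalized eigenform its two-dimensional $\mathfrak p$-adic (or, in weight $1$, Artin; or reducible, in the Eisenstein case) Galois representation, use the oddness relation $\det\rho(c)=-1$ together with $\rho(c)^2=1$ to see that complex conjugation has trace zero, apply Chebotarev to obtain a positive-density set $S$ of primes $\ell$ with $a_\ell\equiv0\pmod{\mathfrak P^{m'}}$, and finish with a multiplicative sieve. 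This is precisely Serre's strategy, and your sketch is correct.

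Two minor remarks. First, your worry at the end about $\alpha_m$ being ``uniform in $m$'' is misplaced: the lemma only claims the existence of some $\alpha_m>0$ for each fixed $m$, and your bound $\delta\ge 1/\#\operatorname{Im}(\rho\bmod\mathfrak P^{m'})$ is perfectly adequate for that, even though this lower bound may shrink as $m$ grows. Second, Serre organizes the final sieve slightly differently---counting directly the integers $n\le X$ with no prime factor in $S$, rather than factoring $n$ as squarefree times powerful---but your variant is equivalent and the Selberg--Delange/Wirsing input you cite is exactly what is needed.
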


\end{proof}

\section{Appendix: Definitions of jacobi forms, theta functions, etc.}
\label{sec:appendix}

We define the Jacobi theta functions $\theta_i(\tau, z)$ as follows for $q := e(\tau)$ and $y := e(z)$. 

\begin{eqnarray*}
\theta_2(\tau, z) &:=& q^{1/8}y^{1/2} \prod_{n = 1}^\infty (1 - q^n)(1 + y q^n)(1 + y^{-1}q^{n - 1}) \\
\theta_3(\tau, z) &:=& \prod_{n = 1}^\infty (1 - q^n)(1 + y q^{n - 1/2})(1 + y^{-1}q^{n - 1/2}) \\
\theta_4(\tau, z) &:=& \prod_{n = 1}^\infty (1 - q^n)(1 - y q^{n - 1/2})(1 - y^{-1}q^{n - 1/2}) \\
\end{eqnarray*}

We use them to define weight zero index $m - 1$ weak Jacobi forms $ \varphi_1^{(m)}$ as follows.  Let 
\begin{align*}
\varphi_1^{(2)} &:= 4 (f_2^2 + f_3^2 + f_4^2),\\
\varphi_1^{(3)} &:= 2(f_2^2 f_3^2 + f_3^2 f_4^2 + f_4^2 f_2^2), \\
\varphi_1^{(4)} &:= 4 f_2^2 f_3^2 f_4^2, \\
\varphi_1^{(5)} &:= \frac{1}{4} \pa{\varphi_1^{(4)} \varphi_1^{(2)} - (\varphi_1^{(3)})^2} \\
\varphi_1^{(7)} &:= \varphi_1^{(3)} \varphi_1^{(5)} - (\varphi_1^{(4)})^2 \\
\varphi_1^{(9)} &:= \varphi_1^{(3)} \varphi_1^{(7)} - ( \varphi_1^{(5)})^2 \\
\varphi_1^{(13)} &:= \varphi_1^{(5)} \varphi_1^{(9)} - 2 (\varphi_1^{(7)})^2 \\
\end{align*}
where $f_i(\tau, z) := \theta_i(\tau, z)/\theta_i(\tau, 0)$ for $i = 2, 3, 4$.   

For the remaining positive integers $m$ with $m \leq 25$, we define $\varphi_1^{(m)}$ recursively.  
\\
For $(12, m - 1) = 1$ and $m > 5$ we set
$$\varphi_1^{(m)} = (12, m - 5) \varphi_1^{(m - 4)} \varphi_1^{(5)} + (12, m - 3) \varphi_1^{(m - 2)} \varphi_1^{(3)} - 2(12, m - 4) \varphi_1^{(m - 3)} \varphi_1^{(4)}.$$
For $(12, m - 1) = 2$ and $m > 10$ we set
$$\varphi_1^{(m)} = \frac{1}{2}\pa{(12, m - 5) \varphi_1^{(m - 4)} \varphi_1^{(5)} + (12, m - 3) \varphi_1^{(m - 2)} \varphi_1^{(3)} - 2 (12, m - 4) \varphi_1^{(m - 3)} \varphi_1^{(4)}}.$$
For $(12, m - 1) = 3$ and $m > 9$, we set
$$ \varphi_1^{(m)} = \frac{2}{3}(12, m - 4) \varphi_1^{(m - 3)} \varphi_1^{(4)} + \frac{1}{3} (12, m - 7) \varphi_1^{(m - 6)} \varphi_1^{(7)} - (12, m - 5) \varphi_1^{(m - 4)} \varphi_1^{(5)}.$$
For $(12, m - 1) = 4$ and $m > 16$ we set 
$$ \varphi_1^{(m)} = \frac{1}{4} \pa{(12, m -13) \varphi_1^{(m - 12)} \varphi_1^{(13)} + (12, m - 5) \varphi_1^{(m - 4)} \varphi_1^{(5)} - (12, m - 9) \varphi_1^{(m - 8)} \varphi_1^{(9)}}.$$
For $(12, m - 1) = 6$ and $m > 18$ we set 
$$ \varphi_1^{(m)} = \frac{1}{3}(12, m - 4) \varphi_1^{(m - 3)} \varphi_1^{(4)} + \frac{1}{6} (12, m - 7) \varphi_1^{(m - 6)} \varphi_1^{(7)} - \frac{1}{2} (12, m - 5) \varphi_1^{(m - 4)} \varphi_1^{(5)}.$$
For $m = 25$, we set
$$ \varphi_1^{(25)} = \frac{1}{2} \varphi_1^{(21)} \varphi_1^{(5)} - \varphi_1^{(19)} \varphi_1^{(7)} + \frac{1}{2} ( \varphi_1^{(13)})^2.$$

See the appendix of \cite{Cheng:2013vr} for more information on the space of weight zero Jacobi forms.  

We use two versions of an Appell-Lerch sum.  The first is the generalized Appell-Lerch sum $\mu_{m, 0}$, defined as in \cite{Cheng:2013vr}.  It is given by
$$\mu_{m, 0}(\tau, z) := -\sum_{k \in \Z}q^{m k^2} y^{2 m k} \frac{1 + y q^k}{1 - y q^k},$$ and is the holomorphic part of a weight 1 index $m$ ``real-analytic Jacobi form''.  

Zwegers \cite{Zwegers:2008wk} uses a slightly different version of the Appell-Lerch sum.  He first defines  
the theta function $$\vartheta(z, \tau) := \sum_{\nu \in 1/2 + \Z} q^{\nu^2/2} y^{\nu} e(\nu/2).$$
Then he defines
$$\mu(u, v; \tau) := \frac{e(u/2)}{\vartheta(v; \tau)} \sum_{n \in \Z} \frac{(-1)^n q^{(n^2 + n)/2} e(n v)}{1 - q^n e(u)}.$$
This is completed to a ``real-analytic Jacobi form'' $\tilde{\mu}(u, v; \tau)$ of weight 1/2 by letting $$\tilde{\mu}(u, v; \tau) := \mu(u, v; \tau) + \frac{i}{2}R(u - v; \tau),$$
where $$R(z, \tau) := \sum_{\nu \in 1/2 + \Z} \left\{ \text{sgn}(\nu) - E(\nu + a) \sqrt{2t}\right\} (-1)^{\nu - 1/2} q^{-\nu^2/2} y^{-\nu},$$
$t := \Im(\tau)$, $a := \frac{\Im(u)}{\Im(\tau)}$, and $E(z) := 2 \int_0^z e^{-\pi u^2} du$. 

\bibliography{moonshine_paper}
\bibliographystyle{plain}

\end{document}